\documentclass[11pt,a4paper, oneside]{article}

\oddsidemargin=0mm
\evensidemargin=0mm
\topmargin=-0.8cm \textheight 223mm \textwidth 16.2cm

\usepackage[centertags]{amsmath}
\usepackage{amsfonts}
\usepackage{amssymb}
\usepackage{dsfont}
\usepackage{amsthm}
\usepackage{epsfig}
\usepackage[utf8]{inputenc} 
\usepackage{setspace}
\usepackage{ae} 
\usepackage{eucal} 
\usepackage[authoryear, round]{natbib}
\usepackage{paralist}
\usepackage[usenames]{color}
\usepackage{setspace}
\setstretch{1.0}
\usepackage{autonum}
\usepackage[active]{srcltx}
\usepackage{subcaption}

\usepackage{appendix}
\theoremstyle{plain}
\newtheorem{thm}{Theorem}[section]
\newtheorem{lem}[thm]{Lemma}

\theoremstyle{definition}
\newtheorem{defi}[thm]{Definition}
\newtheorem{remark}[thm]{Remark}
\newtheorem{algo}[thm]{Algorithm}

\theoremstyle{remark}
\newtheorem{Assumptions}[thm]{Assumptions}

\DeclareMathOperator{\sign}{sign}

\DeclareMathOperator*{\argmin}{argmin}

\DeclareMathOperator*{\arginf}{arginf}

\newcommand{\R}{\mathbb{R}}
\newcommand{\F}{\mathcal{F}}

\usepackage{xcolor}
\pagestyle{plain}
\allowdisplaybreaks
\begin{document}

\title{A segment-wise dynamic programming algorithm for BSDEs}

\author{Christian Bender\footnote{Saarland University, Department of Mathematics,  Campus E2 4, D-66123  Saarbr\"ucken, Germany; Email: bender@math.uni-sb.de}\; and Steffen Meyer\footnote{Email: meyersteffen03@gmail.com}}

\maketitle
\begin{abstract}
We introduce and analyze a family of linear least-squares Monte Carlo schemes for backward SDEs, which interpolate between the one-step dynamic programming scheme of Lemor, Warin, and Gobet (Bernoulli, 2006) and the multi-step dynamic programming scheme of Gobet and Turkedjiev (Mathematics of Computation, 2016). Our algorithm approximates conditional expectations over segments of the time grid. We discuss the optimal choice of the segment length depending on the `smoothness' of the problem and show that, in typical situations, the complexity can be reduced compared to the state-of-the-art  multi-step dynamic programming scheme. 
\\[0.1cm]
{\it Keywords:} backward stochastic differential equations, empirical regression \\[0.1cm]
{\it MSC 2022:} 65C30; 60H10; 62G08
\end{abstract}
\section{Introduction}

In this paper, we propose a family of regression-based algorithms for decoupled forward backward stochastic differential equations (FBSDEs) of the form
  \begin{eqnarray}\label{eq:FBSDE}
  	dX_t & =& b(t,X_t)dt+\sigma(t,X_t)dW_t,\quad
  	X_0 =x_0,\nonumber\\
  	-dY_t & =& f(t,X_t,Y_t,Z_t)dt-Z_tdW_t,\quad
  	Y_T=\xi(X_T)
  \end{eqnarray}
driven by a (multidimensional) Brownian motion $W$. Recall that the solution is a triplet $(X,Y,Z)$ of adapted processes, wherein $X$ and $Y$ describe the dynamics of the forward and backward SDE (BSDE), respectively, and $Z$ steers the backward SDE into the terminal condition without making use of future information.
 Among the numerous applications of BSDEs we mention (nonlinear) derivative pricing problems, drift control problems, and  stochastic representation formulas of Feynman-Kac type for semilinear parabolic partial differential equations, see, e.g., \cite{guyon, karoui1997,  pardoux_peng}. Loosely speaking, in these applications, the $Y$-part of the solution corresponds to the price process of the option, the value of the control problem, or the solution of the PDE, respectively, while the $Z$-part is required to represent the hedging portfolio, an optimal control, or the derivative of the PDE solution.
 
 Motivated by these and other applications, the literature on numerical schemes for BSDEs has tremendously increased over the last two decades, and we refer to \cite{kawai} for a comprehensive survey. Let us emphasize that the practical choice of a numerical scheme for the FBSDE \eqref{eq:FBSDE} crucially depends on the dimension $D$ of the state space of the diffusion process $X$. If $X$ takes values in a low dimensional space, the connection to PDEs via the four-step scheme \citep{ma_protter_yong} can be exploited and the application of fast PDE solvers leads to highly efficient schemes for the FBSDE, see, e.g., \cite{douglas_ma_protter, milstein_tretyakov, ma_shen_zhao}.  For moderate dimensions (up to about $D=10$), simulation based regression methods are among the most popular methods \citep{gobet_al, lemor2006, gobet2016}, while the recent multilevel Picard approach can tackle very high-dimensional problems \citep{E_al,hutzenthaler_al}.
 
 In the present paper, we contribute to the theory of simulation-based regression schemes. More precisely, we design a family of schemes which interpolate between the one-step-dynamic programming approach  (ODP) of \cite{lemor2006} and the multi-step dynamic programming approach (MDP) of \cite{gobet2016}. Both dynamic programming schemes rely on a backward recursion for the time discretization with nested conditional expectations which are approximated by empirical least-squares regression. The key difference is that the ODP computes the numerical solution of the BSDE at a time grid point $t_i$ by regressing a nonlinear function of the numerical solution at the previous time grid point $t_{i+1}$ only, while the MDP applies the numerical solutions at all grid points $t_{i+1}, t_{i+2},\ldots$ up to terminal time $T$. As argued in \cite{gobet2016} (see also \citealp{bender_denk}, for related considerations), the error propagation over time for the approximation of the $Y$-part of the solution can be significantly reduced in the MDP making it superior to the ODP in typical situations. A closer look at both schemes reveals, however, that the approximation of the $Y$-part is the dominating cost in the ODP, while the cost for  approximating the $Z$-part dominates the complexity of the MDP. In order to balance the cost for approximating $Y$-part and $Z$-part, we suggest a family of schemes, which iterates between a single ODP step and an MDP step on time segments containing $\lceil N^\alpha \rceil$ grid points -- here $N$ is the total number of grid points and $\alpha\in [0,1]$. We call this approach `segmentwise dynamic programming' (SDP) and analyze it in detail in this paper. In particular, we discuss how to  optimize the choice of the segment length parameter $\alpha$ in dependence on the state space dimension $D$ and on the smoothness of the PDE representation to the BSDE. It turns out that, in generic situations, the optimal choice of $\alpha$ is in the open interval $(0,1)$, demonstrating that neither the ODP ($\alpha=0$) nor the MDP ($\alpha=1$) are optimal. For the theoretical analysis, we assume that the forward diffusion $X$ can be perfectly simulated on the discrete time grids. One can, however, apply a higher order scheme for $X$ on the coarser grid to implement the SDP scheme, when this assumption is not satisfied.
 
 The paper is structured as follows: In Section 2 we introduce the setting and motivate the SDP algorithm, which is spelled out in detail in Section 3. The theoretical main results on convergence of the SDP scheme are presented in Section 4 along with a complexity analysis. The convergence behavior of the SDP scheme is illustrated and compared to the MDP scheme by a numerical experiment in Section 5.  Finally, the detailed error analysis (combining ideas from \citealp{gobet2016}, \citealp{lemor2006}, and \citealp{bender_denk}) is carried out in Section 6. Some proofs, which follow standard arguments, are provided in the online supplement.

\section{Setup and motivation}
Let $(\Omega,\mathcal{F},\mathbb{F},P)$ be a filtered probability where the filtration is the augmentation of the one generated by a $\mathcal{D}$-dimensional Brownian motion $W$. We consider a decoupled forward backward stochastic differential equation of the form
\begin{align}
dX_t & = b(t,X_t)dt+\sigma(t,X_t)dW_t,\quad
X_0 =x_0,\\
-dY_t & = f(t,X_t,Y_t,Z_t)dt-Z_tdW_t,\quad
Y_T=\xi(X_T)
\end{align}
with deterministic initial value $x_0\in \mathbb{R}^D$ and the terminal time $T>0$ under the following standing assumptions.
\begin{Assumptions} \label{assumptions}
\begin{itemize}
\item[]
\item[$(A_\xi)$] The function $\xi:\mathbb{R}^D\rightarrow \mathbb{R}$ is bounded by some constant $C_\xi$.
\item[$(A_L)$] The functions $b:[0,T]\times \mathbb{R}^D\rightarrow \mathbb{R}^D$, $\sigma:[0,T]\times \mathbb{R}^D\rightarrow \mathbb{R}^{D\times\mathcal{D}}$ and $f:[0,T]\times \mathbb{R}^D\times \mathbb{R}\times \mathbb{R}^\mathcal{D} \rightarrow \mathbb{R}$ are measurable, $\frac{1}{2}$-Hölder-continuous in the first variable and Lipschitz-continuous in the other variables, i.e.,  there exist constants $L_X$ and $L_f$ such that 
\begin{align}
|b(t,x)-b(t',x')|+|\sigma(t,x)-\sigma(t',x')| &\le L_X\left(|t-t'|^\frac{1}{2}+|x-x'|\right)\\
|f(t,x,y,z)-f(t',x',y',z')| &\le L_f\left(|t-t'|^\frac{1}{2}+|x-x'|+|y-y'|+|z-z'|\right)
\end{align}
for all $x,x'\in \mathbb{R}^D$, $t,t'\in [0,T]$, $y,y'\in \mathbb{R}$, $z,z'\in \mathbb{R}^\mathcal{D}$.
\item[$(A_f)$] The function $f$ is uniformly bounded by a constant $C_f$, i.e.,
\begin{align}
f(t,x,y,z)\le C_f
\end{align}
for all $t\in [0,T]$, $x\in \mathbb{R}^D$, $y\in \mathbb{R}$, $z\in \mathbb{R}^\mathcal{D}$.
\end{itemize}
\end{Assumptions}
\noindent
Assumption $(A_L)$ is standard for FBSDEs and yields important characteristics of the processes $X$, $Y$ and $Z$. For once, the assumption on $b$ and $\sigma$ ensures the existence of a unique strong solution $X$ of the SDE and that this solution satisfies $E[\sup_{t\in [0,T]}|X_t|^2]<\infty$ (see e.g. \citealp{karatzas2012}). Then, paired with the assumptions on $f$ and $\xi$ the solution of the BSDE can be expressed by deterministic functions of the SDE solution $X$, i.e., there exist deterministic functions $\overline{y}:[0,T]\times \mathbb{R}^D\rightarrow \mathbb{R}$ and $\overline{z}:[0,T]\times \mathbb{R}^D\rightarrow \mathbb{R}^\mathcal{D}$ such that $\overline{y}(t,X_t)=Y_t$ and $\overline{z}(t,X_t)=Z_t$ (see e.g. \citealp{karoui1997}). The boundedness conditions on $f$ and $\xi$ are posed for convenience only and could be relaxed with minor changes in the error analysis.\\
We now define the equidistant time grid 
\begin{align}
\pi:=\{t_i=i\Delta; ~i=0,\ldots, N\}
\end{align}
with step width $\Delta=T/N$ for a fixed $N\in \mathbb{N}$. We denote the increments of the Brownian motion $W$ on this time grid with $\Delta W_i$, i.e., $\Delta W_i:=W_{t_{i}}-W_{t_{i-1}}$, and define the functions
\begin{align}
\overline{q}^N_i(x)&:=E\left[Y_{t_{i+1}}|X_{t_i}=x\right],\\
\overline{z}^N_i(x)&:=E\left[\frac{\Delta W_{i+1}}{\Delta}Y_{t_{i+1}}\middle|X_{t_i}=x\right]
\end{align}
for all $i\in \{0,\ldots, N-1\}$. Then, under the standing assumptions, it holds that 
\begin{align}
\lim_{N\rightarrow \infty}\left(\max_{i=0,\ldots, N-1}E\left[|\overline{q}^N_i(X_{t_i})-Y_{t_i}|^2\right]+\sum_{i=0}^{N-1}E\left[\int_{t_i}^{t_{i+1}}|\overline{z}^N_i(X_{t_i})-Z_s|^2ds\right]\right)=0,
\end{align}
where the rate of convergence depends on the regularity of $Z$, see e.g. \cite{zhang}. The functions $\overline{q}_i^N$ and $\overline{z}^N_i$ can, therefore, be interpreted as time-discretized versions of the BSDE solution $(Y,Z)$. To obtain an  implementable approximation scheme for $\overline{q}_i^N$ and $\overline{z}_i^N$, fix any function
\begin{align}
\tau:\{0,\ldots, N-2\}\rightarrow \{1,\ldots, N-1\}
\end{align}
satisfying $\tau(i)\ge i+1$ for all $i\in \{0,\ldots, N-2\}$. Then the tower property of the conditional expectation and the Markov property of $X$ yield for any $i\in \{0,\ldots, N-2\}$
\begin{align}
\overline{q}_i^N(x)&=E[Y_{t_{i+1}}|X_{t_i}=x]\\
&=E\left[Y_{t_{\tau(i)+1}}+\int_{t_{i+1}}^{t_{\tau(i)+1}}f(t,X_t,Y_t,Z_t)dt-\int_{t_{i+1}}^{t_{\tau(i)+1}}Z_tdW_t\middle|X_{t_i}=x\right]\\
&=E\left[\overline{q}^N_{\tau(i)}(X_{\tau(i)})+\sum_{j=i+1}^{\tau(i)}\int_{t_{j}}^{t_{j+1}}f(t,X_t,Y_t,Z_t)dt\middle|X_{t_i}=x\right]\\
&\approx E\left[\overline{q}^N_{\tau(i)}(X_{\tau(i)})+\sum_{j=i+1}^{\tau(i)} f\left(t_j,X_{t_j},\overline{q}^N_{j}(X_{t_j}),\overline{z}^N_j(X_{t_j})\right)\Delta\middle|X_{t_i}=x\right]
\end{align}
and similarly
\begin{align}
\overline{z}_i^N(x)&=E\left[\frac{\Delta W_{i+1}}{\Delta}Y_{t_{i+1}}\middle|X_{t_i}=x\right]\\
&=E\left[\frac{\Delta W_{i+1}}{\Delta}\left(Y_{t_{\tau(i)+1}}+\int_{t_{i+1}}^{t_{\tau(i)+1}}f(t,X_t,Y_t,Z_t)dt-\int_{t_{i+1}}^{t_{\tau(i)+1}}Z_tdW_t\right)\middle|X_{t_i}=x\right]\\
&=E\left[\frac{\Delta W_{i+1}}{\Delta}\left(\overline{q}^N_{\tau(i)}(X_{\tau(i)})+\sum_{j=i+1}^{\tau(i)}\int_{t_{j}}^{t_{j+1}}f(t,X_t,Y_t,Z_t)dt\right)\middle|X_{t_i}=x\right]\\
&\approx E\left[\frac{\Delta W_{i+1}}{\Delta}\left(\overline{q}^N_{\tau(i)}(X_{\tau(i)})+\sum_{j=i+1}^{\tau(i)} f\left(t_j,X_{t_j},\overline{q}^N_{j}(X_{t_j}),\overline{z}^N_j(X_{t_j})\right)\Delta\right)\middle|X_{t_i}=x\right].
\end{align}
This motivates the time discretization scheme
\begin{eqnarray} \label{eq:1} \begin{split}
Q_{N-1}^N&:=E\left[\xi(X_{t_N})\middle|\mathcal{F}_{t_{N-1}}\right]\\
Z_{N-1}^N&:=E\left[\frac{\Delta W_{N}}{\Delta}\xi(X_{t_N})\middle|\mathcal{F}_{t_{N-1}}\right]\\
Q_{i}^{N}&:=E\left[Q^N_{\tau(i)}+\sum_{j=i+1}^{\tau(i)}f\left(t_j,X_{t_j},Q^N_{j},Z^N_{j}\right)\Delta\middle|\mathcal{F}_{t_i}\right],\quad i=N-2,\ldots, 0\\
 Z_{i}^{N}&:=E\left[\frac{\Delta W_{i+1}}{\Delta}\left(Q^N_{\tau(i)}+\sum_{j=i+1}^{\tau(i)}f\left(t_j,X_{t_j},Q^N_{j},Z^N_{j}\right)\Delta\right)\middle|\mathcal{F}_{t_i}\right],\quad i=N-2,\ldots, 0.
 \end{split} 
\end{eqnarray}
By the tower property of the conditional expectation, this definition of $Q^N_i$ and $Z^N_i$ does not depend on the choice of $\tau$ and is nothing but a reformulation of the backward Euler scheme by \cite{zhang, bouchard_touzi}. However, the conditional expectations cannot be calculated in closed form in general. Hence, when attempting to solve the BSDE, one has to replace the conditional expectations with some approximation operator resulting in different schemes depending on the choice of $\tau$. As our results will show, the choice of $\tau$ then influences both, the computational costs as well as the convergence properties.\\
The most natural choices for $\tau$ would be for once setting  $\tau(i)=i+1$ or $\tau(i)=N-1$ for all $i\in \{0,\ldots, N-2\}$. The first results in the classical one-step scheme of \cite{lemor2006} (to which we further refer to as ODP), the latter in the multi-step forward scheme (MDP for short) by \cite{gobet2016}. To understand the idea of the segment-wise dynamic programming algorithm that will be introduced in the next section, it is worth reviewing these two schemes and comparing the resulting algorithms.\\
Both algorithms work recursively backward in time by constructing estimates of the functions $\overline{q}^N_i$ and $\overline{z}^N_i$ through approximating the conditional expectations in the corresponding time discretization scheme via empirical (i.e., simulation-based) orthogonal projections on finite-dimensional function spaces, where the components $Q^N_j, Z_j^N$ with $j>i$ on the right-hand side of the discretization scheme are replaced by the approximations found in the previous steps of the recursion. As a result of the different schemes,   the approximation of the ODP algorithm depends at each time point $t_i$ only on the approximations at the time $t_{i+1}$ while in the MDP scheme, the approximation at each step $t_i$ depends on all the previously constructed ones, i.e., those at the time points from  $t_{i+1}$ up to $t_{N-1}$.  Since the approximations of $\overline{q}_i^N$ and $\overline{z}_i^N$ have to be evaluated, one has to simulate in each step of the MDP algorithm segments of the form $(X_{t_i}, X_{t_{i+1}},\ldots, X_{t_N})$ while it suffices in the ODP algorithm to simulate values of $X$ at just the current and the following time point. This obviously leads to higher simulation costs in the MDP scheme. However, since the algorithms recursively reuse the obtained approximations of $\overline{q}_i^N$ and $\overline{z}_i^N$ an error propagation between the time steps occurs. The error analysis in \cite{gobet2016} reveals that, from this perspective, the MDP scheme features improved convergence properties compared to the ODP.  \\
The idea of the segment-wise approach, which will be introduced in the next chapter, is to interpolate between the two  cases of the ODP and the MDP scheme in order to balance these two aspects, the computational costs and convergence properties.
\section{Segment-wise dynamic programming algorithm}
In this section, we present the segment-wise dynamic programming algorithm (SDP, for short) in detail. We first specify a  family of functions $\tau_\alpha$  to obtain the time discretization scheme that  interpolates between the ones from the ODP scheme and the MDP scheme via \eqref{eq:1}.  Then the algorithm is described in detail based on this discretization scheme.\\
For any $\alpha\in [0,1]$, consider the time grid 
\begin{align}
\overline{\pi}_\alpha:=\{(\Delta n \lceil N^\alpha \rceil )\wedge (T-\Delta); n\in \mathbb{N}\}
\end{align}
with step width $\lceil N^\alpha\rceil$ (up to a possibly smaller size in the last step), that consists of $\lceil N^{1-\alpha}\rceil$ time points at most. Based on these time grids define the functions 
\begin{align}
\tau_\alpha:\{0,1,\ldots, N-2\}\rightarrow \{1,\ldots, N-1\}
\end{align}
as
\begin{align}
\tau_\alpha(i):=\min \{ j>i:  j\Delta \in \overline{\pi}_\alpha\}.
\end{align}
For a fixed $\alpha$,  the choice $\tau=\tau_\alpha$ in \eqref{eq:1} then defines a discretization scheme where the time grid $\pi$ is separated in segments consisting of $\lceil N^{\alpha}\rceil$ points by the coarser time grid $\overline{\pi}_\alpha$. The resulting discretization scheme corresponds to an MDP scheme on each of these segments paired with a single step of an ODP scheme between consecutive segments connecting them. Moreover choosing $\alpha=0$ or $\alpha=1$ results in the classical ODP or MDP scheme respectively.\\
Now for a fixed $\alpha\in [0,1]$ the SDP algorithm works as follows.
\begin{algo}
\begin{itemize}
\item[]
\item Choose basis functions
\begin{align}
&p_{q,i}^{k}:\mathbb{R}^D\rightarrow \mathbb{R},\quad ~~k=1,\ldots, K_{q,i}\\
&p_{z,i}^{k}:\mathbb{R}^D\rightarrow \mathbb{R}^\mathcal{D},\quad k=1,\ldots, K_{z,i}
\end{align}
for each $i\in \{0,\ldots, N-1\}$ such that
\begin{align}
\sum_{i=0}^{N-1}\sum_{k=1}^{K_{q,i}}E\left[|p_{q,i}^k(X_{t_i})|^2\right]+\sum_{i=0}^{N-1}\sum_{k=1}^{K_{z,i}}E\left[|p_{z,i}^k(X_{t_i})|^2\right]<\infty.
\end{align}
Here the number of basis functions $K_{q,i}$, $K_{z,i}\in \mathbb{N}$ may depend on the time point $t_i$. We denote the function spaces spanned by these basis functions with $\mathcal{K}_{q,i}$ and $\mathcal{K}_{z,i}$ respectively, i.e.,
\begin{align}
\mathcal{K}_{q,i}&:=span \left(p_{q,i}^{1},\ldots, p_{q,i}^{K_{q,i}}\right)\\
\mathcal{K}_{z,i}&:=span \left(p_{z,i}^{1},\ldots, p_{z,i}^{K_{z,i}}\right).
\end{align}
The algorithm will approximate  $\overline{q}^N_i$ by empirical orthogonal projections on the subspaces $\mathcal{K}_{q,i}$ and $\overline{z}^N_i$ by projections on $\mathcal{K}_{z,i}$.
\item Initialize the algorithm by setting
\begin{align}
\Xi_{N-1}^{{N,M}}(x_N):=\xi(x_N)
\end{align}
for all $x_N\in \mathbb{R}^{D}$. Then, assuming $\Xi^{N,M}_i$ is already constructed, perform the following backward recursion for $i=N-1,N-2,\ldots, 0 $:
\begin{itemize}
\item [1*)]If $i=N-1$: Choose $M_{N-1}\in \mathbb{N}$, then simulate $M_{N-1}$ independent copies 
\begin{align}
\left(X^{[N-1,m,N]}_{t_{N-1}}, X^{[N-1,m,N]}_{t_N}, \Delta W^{[N-1,m,N]}_{N}\right)_{m=1,\ldots, M_{N-1}}
\end{align}
of the segment $(X_{t_{N-1}}, X_{t_N}, \Delta W_{N})$ and set 
\begin{align}
X^{[N-1,m,N]}:=X^{[N-1,m,N]}_{t_N}.
\end{align}
\item[1)] If $i<N-1$: Choose a  $M_i\in \mathbb{N}$,  then simulate $M_i$ independent copies 
\begin{align}
\left(X^{[i,m,N]}_{t_i},\ldots, X^{[i,m,N]}_{t_{\tau_\alpha(i)}}, \Delta W^{[i,m,N]}_{i+1}\right)_{m=1,\ldots, M_i}
\end{align}
of the segment $(X_{t_i},\ldots, X_{\tau_\alpha(i)}, \Delta W_{i+1})$ and set
\begin{align}
X^{[i,m,N]}:=\left(X^{[i,m,N]}_{t_{i+1}},\ldots, X^{[i,m,N]}_{t_{\tau_\alpha(i)}}\right).
\end{align}
\item[2)] Find solutions to the linear least-squares regression problems 
\begin{align}
\varphi_i^{q^{N,M}}=\argmin_{\psi \in \mathcal{K}_{q,i}}\left(\frac{1}{M}\sum_{m=1}^{M_i}\left|\psi\left(X^{[i,m,N]}_{t_i}\right)-\Xi^{N,M}_{i}\left(X^{[i,m,N]}\right)\right|^2\right)
\end{align}
and
\begin{align}
\varphi_i^{z^{N,M}}=\argmin_{\psi \in \mathcal{K}_{z,i}}\left(\frac{1}{M}\sum_{m=1}^{M_i}\left|\psi\left(X^{[i,m,N]}_{t_i}\right)-\frac{\Delta W_{i+1}^{[i,m,N]}}{\Delta}\Xi^{N,M}_{i}\left(X^{[i,m,N]}\right)\right|^2\right).
\end{align}
\item[3)] Define approximations $q_i^{N,M}$ and $z_i^{N,M}$ of the functions $\overline{q}^N_i$ and $\overline{z}^N_i$ via
\begin{align}
q_i^{N,M}:=\mathcal{T}_{C_{q,i}}\circ \varphi_i^{q^{N,M}}, \quad  z_i^{N,M}:=\mathcal{T}_{C_{z,i}}\circ \varphi_i^{z^{N,M}}
\end{align}
where $C_{q,i}:=C_\xi+(T-t_{i+1})C_f$ and $C_{z,i}:=\frac{C_{q,i}}{\Delta}$ are positive constants and $\mathcal{T}_c$ is the truncation function defined as 
\begin{align}
\mathcal{T}_c(x):=\sign(x)\min\{|x|,c\}
\end{align}
for any constant $c>0$ (acting componentwise on $\varphi^{z^{N,M}}$).
\item[ 4)] If $i\ge 1$, set 
\begin{align}
\Xi^{N,M}_{i-1}(x_{i},\ldots, x_{\tau_\alpha(i-1)}):=q_{\tau_\alpha(i-1)}^{N,M}(x_{\tau_\alpha(i-1)})+\sum_{j=i}^{\tau_\alpha(i-1)}\Delta  f\left(t_j,x_{j},q^{N,M}_j(x_j),z^{N,M}_j(x_j)\right)
\end{align}
as preparation for the next iteration.
\end{itemize}
\end{itemize}
\end{algo}
\noindent
The solutions to the empirical least squares problems can be computed numerically using a singular value decomposition. Hence, the algorithm is fully implementable as long as the segments $(X_{t_i},\ldots, X_{t_{\tau_\alpha(i)}})$ can be simulated. Then,  in the typical situation, for example $X=W$, the average costs for the simulation of one segment $(X_{t_i},\ldots, X_{t_{\tau_\alpha(i)}})$ are of order $\mathcal{O}(N^\alpha)$. For $\alpha<1$ this leads to smaller computation costs through simulations as the MDP scheme, where the average costs for simulating one set  $(X_{t_i},\ldots, X_{t_{N}})$ are of order $\mathcal{O}(N)$. When $X$ can not be sampled perfectly, it would, in principle,  be possible  to replace $X$ with some approximation scheme with minor changes in the error analysis. The problem is, however, to approximate $X$ in a way that sustains the gain in computation costs compared to the MDP scheme, which would not be the case  in the simplest approach when approximating $X$ with a naive Euler scheme starting at the time $0$. In theory, one could approximate $X_{t_i}$ with some high-order approximation scheme \citep{kloeden1992} and use an Euler scheme inside the segment $(X_{t_i},\ldots, X_{t_{\tau_\alpha(i)}})$ and preserve at least some gain in computation costs. However, we restrict the theoretical analysis to the assumption that the values of $X$ can be sampled directly on the time grid $\pi$.
\section{Convergence rates and analysis of the complexity}\label{sect:conv}
In this section, we state error bounds for the quadratic error of the SDP algorithm   We then analyze how to optimally calibrate  the parameters of the algorithm to the smoothness of the problem. The results show that the optimal choice of the segment length parameter $\alpha$ is always in the open interval $(0,1)$, and hence the SDP algorithm presented in Section 3 is advantageous when compared to the MDP and ODP schemes.

Our first bound for the mean-squared error of the SDP scheme is given in the following theorem:
\begin{thm}\label{thm:error1}
Under the standing assumptions, 
\begin{align}
&\max_{0\le i\le N-1}E\left[|q^{N,M}_i(X_{t_i})-\overline{q}^N_i(X_{t_i})|^2\right]+\sum_{i=0}^{N-1}\Delta E\left[|z_i^{N,M}(X_{t_i})-\overline{z}_i^N(X_{t_i})|^2\right]\\
&\le c\max_{i\in \mathcal{I}}\left(N^{1-\alpha} \inf_{\psi \in \mathcal{K}_{q,i}} E\left[|\psi(X_{t_i})-\overline{q}^N_i(X_{t_i})|^2\right] + N^{2-2\alpha}\frac{K_{q,i}}{M_i}+ N^{2-2\alpha}\frac{K_{q,i}\log(M_i)}{M_i}\right)\\
&\quad +c\max_{0\le i\le N-1}\left(\inf_{\psi \in \mathcal{K}_{q,i}} E\left[|\psi(X_{t_i})-\overline{q}^N_i(X_{t_i})|^2\right] + \inf_{\psi \in \mathcal{K}_{z,i}} E\left[|\psi(X_{t_i})-\overline{z}^N_i(X_{t_i})|^2\right]\right.\\
&\quad+\left.\frac{K_{q,i}}{M_i}+N\frac{K_{z,i}}{M_i}+\frac{K_{q,i}\log(M_i)}{M_i}+ N \frac{K_{z,i}\log(M_i)}{M_i}\right)\\
&  \quad+cN\mathcal{R}^N
\end{align}
where $\mathcal{I}:=\{i: t_i\in \overline{\pi}_\alpha\}$, $c$ is a positive constant not depending on $N$ and
\begin{align}
\mathcal{R}^N:=\sum_{i=1}^{N-1}E\left[\left(\int_{t_{i}}^{t_{i+1}}E\left[f\left(s,X_s,Y_s,Z_s\right)-f\left(t_i,X_{t_i},\overline{q}_i^N(X_{t_i}),\overline{z}^N_i(X_{t_i})\right)\middle|X_{t_{i-1}}\right]ds\right)^2\right].
\end{align}
\end{thm}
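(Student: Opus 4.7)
The strategy is to combine the error propagation technique from \cite{gobet2016} for the MDP portion within each segment with the ODP-type analysis of \cite{lemor2006} between consecutive coarse grid points $t_i, t_{\tau_\alpha(i)} \in \overline{\pi}_\alpha$, and to use the concentration bounds from \cite{bender_denk} to control the empirical least-squares error. I would introduce auxiliary sequences $(\overline{Q}_i^{N,M}, \overline{Z}_i^{N,M})$ defined by the true (non-empirical) orthogonal projections of $\Xi_i^{N,M}$ onto $\mathcal{K}_{q,i}$ and $\mathcal{K}_{z,i}$, and decompose the error as
\begin{align*}
q_i^{N,M} - \overline{q}_i^N = \bigl(q_i^{N,M} - \overline{Q}_i^{N,M}\bigr) + \bigl(\overline{Q}_i^{N,M} - Q_i^N\bigr) + \bigl(Q_i^N - \overline{q}_i^N\bigr),
\end{align*}
with an analogous decomposition for $z_i^{N,M}$. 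The last piece is the time-discretization error, whose cumulative contribution will yield the term $cN\mathcal{R}^N$ via a standard manipulation of the BSDE together with the Lipschitz assumption $(A_L)$ on $f$.

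\textbf{Within a segment.} For $i$ with $\tau_\alpha(i) = \tau_\alpha(i-1)$ (i.e., $i$ strictly inside a segment), the recursion at step $i$ uses the same target $Q_{\tau_\alpha(i)}^N$ as at step $i-1$, so errors on $(q_j^{N,M}, z_j^{N,M})$ for $j$ in the current segment only propagate through the Lipschitz drift $f$ within the segment. Following the MDP analysis, I would apply Young's inequality together with the Lipschitz bound on $f$ to derive a recursion of the form
\begin{align*}
e_i \le (1+c\Delta)e_{\tau_\alpha(i)} + c\Delta \!\!\!\sum_{j=i+1}^{\tau_\alpha(i)} \!\!\bigl(\text{regression error at }j\bigr) + c\Delta\,\mathcal{R}^{N,\text{seg}}_i,
\end{align*}
where $e_i := E[|q_i^{N,M} - \overline{q}_i^N|^2] + \sum_{j=i+1}^{\tau_\alpha(i)}\Delta E[|z_j^{N,M} - \overline{z}_j^N|^2]$. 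Crucially, the segment length is only $\lceil N^\alpha \rceil$, so no amplification factor appears inside the segment. This gives the second max in the theorem, in which the regression and statistical variances enter without any extra $N$-power (apart from the $N$ factor on the $z$-statistical terms, which arises from the factor $\Delta W_{i+1}/\Delta$ in the target of the $z$-regression and the bound $\|\Delta W_{i+1}/\Delta\|_\infty \sim N$).

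\textbf{Across coarse grid points.} At the coarse nodes $i \in \mathcal{I}$, the target $\Xi_{i-1}^{N,M}$ ``resets'' and sees the error $e_{\tau_\alpha(i-1)}$ of the previously constructed $q_{\tau_\alpha(i-1)}^{N,M}$. Iterating the within-segment recursion across the $|\mathcal{I}| \sim N^{1-\alpha}$ coarse segments, and using Gronwall on the discrete coarse-grid times, each statistical error $K_{q,i}/M_i$ at a coarse node gets propagated $\sim N^{1-\alpha}$ times, and, when combined with the Young-inequality constants $(1+c\Delta \lceil N^\alpha\rceil)$ from the segment bound, yields the first max in the theorem with prefactors $N^{1-\alpha}$ (for the approximation error, counted once per coarse segment) and $N^{2-2\alpha}$ (for the statistical variances that are summed over the coarse nodes and amplified by the ODP-type step). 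The concentration bounds of \cite{bender_denk}, together with the truncation via $\mathcal{T}_{C_{q,i}}$ and $\mathcal{T}_{C_{z,i}}$, deliver exactly the $K/M + K\log(M)/M$ form for the deviation of the empirical projection from the population projection.

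\textbf{Main obstacle.} The delicate point will be to carefully separate the two regimes so that the statistical/approximation errors at a non-coarse index $i$ are not amplified by $N^{2-2\alpha}$ (only by $1$ and $N$ respectively), while at coarse indices $i \in \mathcal{I}$ they do pay the full $N^{1-\alpha}$ and $N^{2-2\alpha}$ factors. Technically this requires tracking two separate recursions (one at the fine scale, driven within each segment; one at the coarse scale, driven by a Gronwall-type argument across $\mathcal{I}$) and choosing the $\epsilon$-parameters in Young's inequality so that segment-internal constants do not interact with inter-segment Gronwall constants. Once this bookkeeping is set up, combining the three pieces yields the claimed bound.
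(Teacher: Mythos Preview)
Your overall strategy---separate the analysis into an MDP-type step within each segment and an ODP-type step across coarse nodes---is correct and matches the paper. However, the decomposition you propose and the mechanism you identify for the $N^{2-2\alpha}$ factor both differ from the paper's proof, and the latter is a genuine gap.

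\textbf{Decomposition.} You split around the \emph{population} projection $\overline{Q}_i^{N,M}$ of $\Xi_i^{N,M}$. The paper instead works in the \emph{empirical} norm $\|\cdot\|_{i,M}$, splits around $E_i^M[\varphi^{\overline{q}^N}_i]$ (the empirical projection of $\overline{q}_i^N$), and reduces the propagating part to the scalar quantity $\xi^q_i(x)=E[\Xi_i^{N,M}-\Xi_i^N\mid X_{t_i}=x,\mathcal{F}_0^M]$. The recursion is carried on $E[\xi^q_i(X_{t_i})^2]$, not on your $e_i$. This matters because within a segment one has the exact telescoping $\Xi_i^{N,M}=\Xi_{i+1}^{N,M}+\Delta f(\ldots)$ and $\Xi_i^N=\Xi_{i+1}^N+\int f\,ds$, which drives the clean step-by-step recursion; your stated one-shot jump $e_i\le(1+c\Delta)e_{\tau_\alpha(i)}+\ldots$ does not exploit this. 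The switch from $\|\cdot\|_{i,M}$ to $\|\cdot\|_{i,\infty}$ is done via a separate lemma (the paper's Lemma~\ref{lem:samplechange}, adapted from \cite{gobet2016}, not \cite{bender_denk}), which is where the $K\log(M)/M$ terms enter.

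\textbf{The $N^{2-2\alpha}$ factor.} Your explanation---propagation over $N^{1-\alpha}$ coarse nodes combined with ``Young-inequality constants $(1+c\Delta\lceil N^\alpha\rceil)$''---does not produce $N^{2-2\alpha}$: the constant $(1+c\Delta\lceil N^\alpha\rceil)=1+O(N^{\alpha-1})$ is essentially $1$ and, raised to the $N^{1-\alpha}$-th power, stays bounded. The paper's mechanism is different: at each coarse node $j\in\mathcal{I}$ one applies Young and the sample-change lemma with parameters $\kappa=\epsilon=N^{\alpha-1}$, so that the factor $(1+\kappa)(1+\epsilon)$ multiplying the propagated term remains bounded over all coarse nodes (this is the weight $\lambda_i=(1+\Gamma\Delta)^i(1+N^{\alpha-1})^{2|\{j\le i:\,t_j\in\overline{\pi}_\alpha\}|}\le e^{T\Gamma+4}$), while the statistical terms $K_{q,j}/M_j$ and $K_{q,j}\log(M_j)/M_j$ pick up a factor $\kappa^{-1}=\epsilon^{-1}=N^{1-\alpha}$. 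Summing these over the $\lceil N^{1-\alpha}\rceil$ coarse nodes gives the second factor of $N^{1-\alpha}$, hence $N^{2-2\alpha}$; the projection error $\inf_\psi(\cdot)$ does not carry the $\kappa^{-1}$ factor and therefore ends up with only $N^{1-\alpha}$. Without this specific choice of $\kappa,\epsilon$ you cannot simultaneously keep the Gronwall constant bounded and obtain the stated prefactors, so this is the step where your plan as written would fail.

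\textbf{Minor points.} The $N$ in front of $K_{z,i}/M_i$ does not come from an $L^\infty$ bound on $\Delta W_{i+1}/\Delta$ (which is unbounded) but from the conditional variance bound $E[|\Delta W_{i+1}/\Delta|^2|\Xi_i^{N,M}|^2]\le \mathcal{D}C_{q,i}^2/\Delta$ in Lemma~\ref{lem:projection}(iv). The concentration bounds are taken from \cite{gobet2016}, not \cite{bender_denk}.
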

\noindent
As usual, we can think of this bound as a composition of terms due to three different error sources. The first source of error is due to the projection on the finite-dimensional subspaces. This projection error can be controlled by the choice of basis functions. The term $\mathcal{R}^N$ only depends on the continuous-time  BSDE solution and the finer time grid (through the discretized functions $\overline{q}^N_i$ and $\overline{z}^N_i$), but not on the approximation obtained by the algorithm. It can be interpreted as part of the time discretization error. The remaining terms are statistical error terms depending on the sample size. Those can be controlled by increasing the number of simulations in dependence of the number of basis functions.

The bound shows the influence of the parameter $\alpha$, as the projection and statistical error terms regarding  $\overline{q}^N$  appear once at all time steps and once  on the time steps of the coarser time grid $\overline{\pi}$ with different factors that are  decreasing in $\alpha$. Although larger values for $\alpha$ appear to be favorable from this perspective,  increasing the parameter $\alpha$ also results in higher computational costs as paths then have to be simulated on larger segments. Hence, when optimizing $\alpha$ a trade-off between convergence properties and computational costs has to be taken into account.

For $\alpha=1$ the terms in the first bracket are dominated by the remaining terms, and we essentially reproduce the error analysis of the MDP scheme in \cite{gobet2016} with some minor differences: In particular, the projection error in Theorem \ref{thm:error1} is formulated in terms of the true continuous time solution $(Y,Z)$ of the BSDE via the functions $\overline{q}^N_i$ and $\overline{z}_i^N$ while it is stated in terms of the backward Euler discretization scheme for BSDEs in \cite{gobet2016}.
When choosing $\alpha=0$, one ends up with an error analysis for the ODP scheme with independent re-simulation at every time point. This differs from the error analysis in \cite{lemor2006}, as they only apply  one cloud of simulations, which is re-used  at any time point, resulting  in an additional (and dominating) interdependency error. Hence, the bound obtained with Theorem \ref{thm:error1} for $\alpha=0$ also allows for a comparison of the ODP and MDP algorithms in a unified setting.

Typical bounds for the term $\mathcal{R}^N$ in dependence of $N$ are of order $N^{-2}$ or $N^{-3}$, depending on   regularity assumptions as illustrated by the following theorems. These theorems can be derived from Theorem \ref{thm:error1} by applying standard techniques to bound the expected quadratic difference between the functions $\overline{q}^N$ and $\overline{z}^N$ and their continuous time counterparts  $\overline{y}$ and $\overline{z}$ depending on the regularity of the BSDE, see \cite{zhang, gobet_labart}. Detailed proofs are provided in an online supplement\footnote{available at: https://www.uni-saarland.de/fakultaet-mi/stochastik/prof-dr-christian-bender/publications.html}.
\begin{thm}\label{thm:error2}
Under the standing assumptions, suppose that $\overline{z}$ is $\frac{1}{2}$-Hölder continuous in $t$ and Lipschitz continuous in $x$. Then,
\begin{align}
	& \max_{0\le i\le N-1}E\left[|q^{N,M}_i(X_{t_i})-\overline{y}(t_i,X_{t_i})|^2\right]+\sum_{i=0}^{N-1}E\left[\int_{t_i}^{t_{i+1}}|z_i^{N,M}(X_{t_i})-\overline{z}(s,X_s)|^2ds\right]\\
	& \le c\max_{i\in \mathcal{I}}\left(N^{1-\alpha} \inf_{\psi \in \mathcal{K}_{q,i}} E\left[|\psi(X_{t_i})-\overline{y}(t_i,X_{t_i})|^2\right] + N^{2-2\alpha}\frac{K_{q,i}}{M_i}+ N^{2-2\alpha}\frac{K_{q,i}\log(M_i)}{M_i}\right)\\
	&\quad +c\max_{0\le i\le N-1}\left(\inf_{\psi \in \mathcal{K}_{q,i}} E\left[|\psi(X_{t_i})-\overline{y}(t_i,X_{t_i})|^2\right] + \inf_{\psi \in \mathcal{K}_{z,i}} E\left[|\psi(X_{t_i})-\overline{z}(t_i,X_{t_i})|^2\right]\right.\\
	& +\left.\frac{K_{q,i}}{M_i}+N\frac{K_{z,i}}{M_i}+\frac{K_{q,i}\log(M_i)}{M_i}+ N \frac{K_{z,i}\log(M_i)}{M_i}\right)\\
	&  +cN^{-1},
\end{align}
where $\mathcal{I}:=\{i: t_i\in \overline{\pi}_\alpha\}$ and $c$ is a positive constant not depending on $N$.
\end{thm}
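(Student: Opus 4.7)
The strategy is to derive Theorem \ref{thm:error2} from Theorem \ref{thm:error1} by performing two reductions: (a) replacing the discrete-time reference functions $\overline{q}^N_i$ and $\overline{z}^N_i$ by the continuous-time value functions $\overline{y}(t_i,\cdot)$ and $\overline{z}(\cdot,\cdot)$ both in the left-hand side and in the projection-error infima on the right-hand side, and (b) bounding the discretization remainder $N\mathcal{R}^N$ by a multiple of $N^{-1}$. Both reductions rely only on standard BSDE regularity estimates of the Zhang/Gobet-Labart type and introduce no new algorithmic considerations.

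For step (a), the key quantity is $E[|\overline{q}^N_i(X_{t_i}) - \overline{y}(t_i,X_{t_i})|^2]$. Using the BSDE dynamics I would write $Y_{t_{i+1}} = Y_{t_i} - \int_{t_i}^{t_{i+1}} f(s,X_s,Y_s,Z_s)\,ds + \int_{t_i}^{t_{i+1}} Z_s\,dW_s$ and condition on $X_{t_i}$. The stochastic integral term vanishes by tower together with the martingale property under $\mathcal{F}_{t_i}$, and the drift contributes at most $C_f \Delta$ almost surely, yielding an $L^2$-discrepancy of order $N^{-2}$. For the $Z$-component, the assumed $\tfrac{1}{2}$-Hölder continuity in $t$ and Lipschitz continuity in $x$ of $\overline{z}$ yields the classical $L^2$-projection bound $\sum_{i=0}^{N-1} E[\int_{t_i}^{t_{i+1}} |\overline{z}^N_i(X_{t_i}) - \overline{z}(s,X_s)|^2\,ds] = O(N^{-1})$. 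Plugging these estimates into the elementary inequality $\inf_\psi E[|\psi-A|^2] \le 2\inf_\psi E[|\psi-B|^2] + 2E[|A-B|^2]$, applied to each projection-error term on the right-hand side of Theorem \ref{thm:error1}, and into a triangle inequality on the left-hand side, transforms the bound of Theorem \ref{thm:error1} into the desired projection errors against $\overline{y}$ and $\overline{z}$, with the extra contributions absorbed into the $cN^{-1}$ remainder.

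For step (b), the plan is to estimate the integrand in $\mathcal{R}^N$ via the Lipschitz continuity of $f$ by $L_f(\Delta^{1/2} + |X_s - X_{t_i}| + |Y_s - \overline{q}^N_i(X_{t_i})| + |Z_s - \overline{z}^N_i(X_{t_i})|)$. The classical moment bounds $E[|X_s - X_{t_i}|^2]$ and $E[|Y_s - Y_{t_i}|^2]$ are $O(\Delta)$; combined with step (a), this gives $E[|Y_s - \overline{q}^N_i(X_{t_i})|^2] = O(\Delta)$; and the Hölder/Lipschitz regularity of $\overline{z}$ together with the projection bound above yields $E[|Z_s - \overline{z}^N_i(X_{t_i})|^2] = O(\Delta)$ in the averaged sense over $[t_i,t_{i+1}]$. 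Each term on the right-hand side thus has $L^2$-size $O(\Delta^{1/2})$; Jensen's inequality on the conditional expectation given $X_{t_{i-1}}$ preserves this size, integration over $[t_i,t_{i+1}]$ multiplies by $\Delta$, and squaring then summing over $i=1,\ldots,N-1$ produces $\mathcal{R}^N = O(N^{-2})$, so that $N\mathcal{R}^N = O(N^{-1})$ as required.

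The main obstacle is the careful tracking of the regularity estimate for $Z$ in the averaged sense: one must combine the $\tfrac{1}{2}$-Hölder/Lipschitz continuity of $\overline{z}$ with the $L^2$-projection characterization of $\overline{z}^N_i$ to get the required $O(\Delta)$ bound for $E[|Z_s - \overline{z}^N_i(X_{t_i})|^2]$ uniformly enough in $s\in[t_i,t_{i+1}]$ for both step (a) and the $Z$-piece of step (b). These are technical but routine calculations in the spirit of \cite{zhang} and \cite{gobet_labart}, which is precisely why the authors relegate the full details to the online supplement.
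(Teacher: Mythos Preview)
Your proposal is correct and follows precisely the route the paper indicates: derive Theorem~\ref{thm:error2} from Theorem~\ref{thm:error1} by (a) replacing $\overline{q}^N_i,\overline{z}^N_i$ with $\overline{y}(t_i,\cdot),\overline{z}(t_i,\cdot)$ via the standard $O(N^{-2})$ and $O(N^{-1})$ discrepancy bounds, and (b) showing $\mathcal{R}^N=O(N^{-2})$ through the Lipschitz property of $f$ and the Zhang-type path regularity estimates. One small sharpening: your $Z$-discrepancy $E[|\overline{z}^N_i(X_{t_i})-\overline{z}(t_i,X_{t_i})|^2]=O(\Delta)$ actually holds uniformly in $i$ (not merely in the time-averaged sense), which is what you need for the $\max_i$ projection term on the right-hand side; this follows directly from the It\^o isometry identity $\overline{z}^N_i(X_{t_i})=\Delta^{-1}E[\int_{t_i}^{t_{i+1}}Z_s\,ds\mid X_{t_i}]+O(\Delta^{1/2})$ together with the assumed H\"older/Lipschitz regularity of $\overline{z}$.
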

\noindent
Theorem \ref{thm:error2} provides, in the context of Theorem \ref{thm:error1}, the standard error bound of order $N^{-1/2}$ for the time discretization within the backward Euler scheme \citep{zhang,bouchard_touzi}.
Note that the projection errors are measured with respect to the continuous time PDE representation of the BSDE \eqref{eq:FBSDE}. Hence the choice of the basis functions can be adapted to known regularity results for the corresponding semilinear parabolic Cauchy problem, see, e.g.,
\cite{crisan_delarue}.

 Under additional regularity  assumptions, it is known from \cite{gobet_labart} that a time discretization error of order $N^{-1}$ can be achieved. In the context of the SDP, the following result can be derived in this respect.
\begin{thm}\label{thm:error3}
Additionally to the standing assumptions, suppose that $X=W$, and that $f $ and $\overline{y}$ are  twice, resp. $s+1$ times continuously differentiable ($s\ge 2$) with bounded derivatives. Then,
\begin{align}
& \max_{0\le i\le N-1}E\left[|q^{N,M}_i(W_{t_i})-\overline{y}(t_i,W_{t_i})|^2\right]+\sum_{i=0}^{N-1}\Delta E\left[|z^{N,M}_i(W_{t_i})-\overline{z}(t_i,W_{t_i})|^2\right]\\
& \le c\max_{i\in \mathcal{I}}\left(N^{1-\alpha} \inf_{\psi \in \mathcal{K}_{q,i}} E\left[|\psi(W_{t_i})-\overline{q}^{N}_i(W_{t_i})|^2\right] + N^{2-2\alpha}\frac{K_{q,i}}{M_i}+ N^{2-2\alpha}\frac{K_{q,i}\log(M_i)}{M_i}\right)\\
& \quad +c\max_{0\le i\le N-1}\left(\inf_{\psi \in \mathcal{K}_{q,i}} E\left[|\psi(W_{t_i})-\overline{q}^{N}_i(W_{t_i})|^2\right] + \inf_{\psi \in \mathcal{K}_{z,i}} E\left[|\psi(W_{t_i})-\overline{z}^{N}_i(W_{t_i})|^2\right]\right.\\
& +\left.\frac{K_{q,i}}{M_i}+N\frac{K_{z,i}}{M_i}+\frac{K_{q,i}\log(M_i)}{M_i}+ N \frac{K_{z,i}\log(M_i)}{M_i}\right)\\
&  +cN^{-2}
\end{align}
where $\mathcal{I}:=\{i: t_i\in \overline{\pi}_\alpha\}$ and $c$ is a positive constant not depending on $N$. Furthermore, $\overline{q}^N$ and $\overline{z}^N$ are bounded and $s+1$ times,  respectively,  $s$ times continuously differentiable with bounded derivatives.
\end{thm}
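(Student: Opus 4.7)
The plan is to derive Theorem~\ref{thm:error3} from Theorem~\ref{thm:error1} in three steps: (i) establish the asserted smoothness and boundedness of $\overline{q}^N$ and $\overline{z}^N$; (ii) upgrade the residual bound on $N\mathcal{R}^N$ from the generic $O(N^{-1})$ to $O(N^{-2})$; and (iii) transfer the left-hand side from the discretized targets $\overline{q}^N_i(W_{t_i}),\overline{z}^N_i(W_{t_i})$ to the continuous-time PDE solution $\overline{y}(t_i,W_{t_i}),\overline{z}(t_i,W_{t_i})$, absorbing the mismatch into the $cN^{-2}$ term.

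For step (i), since $X=W$ the identity $\overline{q}^N_i(x)=E[\overline{y}(t_{i+1},W_{t_{i+1}})\mid W_{t_i}=x]$ realizes $\overline{q}^N_i$ as a Gaussian convolution of $\overline{y}(t_{i+1},\cdot)$; differentiation under the expectation transfers $(s+1)$-fold continuous differentiability with bounded derivatives (and boundedness itself is immediate from $(A_\xi),(A_f)$ via the feedback representation). For $\overline{z}^N_i$, a Gaussian integration by parts rewrites
\begin{align*}
\overline{z}^N_i(x)=E\Big[\tfrac{\Delta W_{i+1}}{\Delta}\overline{y}(t_{i+1},W_{t_{i+1}})\,\Big|\,W_{t_i}=x\Big]=E[\nabla\overline{y}(t_{i+1},W_{t_{i+1}})\mid W_{t_i}=x]=E[\overline{z}(t_{i+1},W_{t_{i+1}})\mid W_{t_i}=x],
\end{align*}
and the same convolution argument, applied to $\overline{z}=\nabla\overline{y}$ (which is $s$ times continuously differentiable with bounded derivatives), yields the claimed $s$-fold regularity of $\overline{z}^N_i$. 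For step (iii), It\^o's formula for $\overline{y}$ together with the semilinear PDE $\partial_t\overline{y}+\tfrac12\Delta\overline{y}+f(t,x,\overline{y},\nabla\overline{y})=0$ gives
\begin{align*}
\overline{q}^N_i(x)-\overline{y}(t_i,x)=-E\!\left[\int_{t_i}^{t_{i+1}}\! f(s,W_s,\overline{y}(s,W_s),\overline{z}(s,W_s))\,ds\,\Big|\,W_{t_i}=x\right],
\end{align*}
so $|\overline{q}^N_i(x)-\overline{y}(t_i,x)|\le C_f\Delta$; the analogous computation for $\overline{z}$ (using that $\overline{z}$ solves its own PDE with bounded data, thanks to the regularity established in (i)) gives $|\overline{z}^N_i(x)-\overline{z}(t_i,x)|\le c\Delta$ uniformly in $x$. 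A triangle inequality on the left-hand side of Theorem~\ref{thm:error1} then costs only an extra $O(\Delta^2)=O(N^{-2})$.

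The main work is step (ii). Set $g(s,y):=f(s,y,\overline{y}(s,y),\overline{z}(s,y))$, which by the chain rule and the assumptions is $C^{1,2}$ with bounded derivatives. Since $f(s,W_s,Y_s,Z_s)=g(s,W_s)$, the integrand of $\mathcal{R}^N$ splits as
\begin{align*}
\bigl[g(s,W_s)-g(t_i,W_{t_i})\bigr]+\bigl[f(t_i,W_{t_i},\overline{y}(t_i,W_{t_i}),\overline{z}(t_i,W_{t_i}))-f(t_i,W_{t_i},\overline{q}^N_i(W_{t_i}),\overline{z}^N_i(W_{t_i}))\bigr].
\end{align*}
The Lipschitz bound $(A_L)$ together with step (iii) controls the second bracket by $O(\Delta)$ pointwise, contributing at most $O(\Delta^2)$ to the inner time integral. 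For the first bracket, It\^o yields $g(s,W_s)-g(t_i,W_{t_i})=\int_{t_i}^s\mathcal{L}g(u,W_u)\,du+\int_{t_i}^s\nabla g(u,W_u)\,dW_u$ with $\mathcal{L}g:=\partial_t g+\tfrac12\Delta g$ bounded; conditioning on $X_{t_{i-1}}$ at a time strictly prior to $t_i$ kills the martingale increment, after which Fubini gives $\int_{t_i}^{t_{i+1}}\!\int_{t_i}^s du\,ds=\Delta^2/2$ and hence an $O(\Delta^2)$ bound for the inner quantity. Squaring and summing over the $N$ indices yields $\mathcal{R}^N=O(N\Delta^4)=O(N^{-3})$, so $N\mathcal{R}^N=O(N^{-2})$.

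The principal obstacle is the $O(\Delta^2)$ bound for the inner quantity in $\mathcal{R}^N$: it hinges decisively on the conditioning being at $t_{i-1}$ rather than at $t_i$, which is exactly what permits the martingale term in It\^o's formula to vanish in expectation and thereby produces the extra factor of $\Delta$ over the na\"ive bound. The additional $C^{1,2}$ regularity of $f$ and $\overline{y}$ enters precisely through the boundedness of $\mathcal{L}g$ in this step, and through the differentiability needed to justify the Gaussian integration by parts and the transfer of smoothness in step (i). Without these, only the weaker $O(\Delta)$ residual of Theorem~\ref{thm:error2} would remain.
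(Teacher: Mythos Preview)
Your proposal is correct and follows essentially the same route as the paper, which derives Theorem~\ref{thm:error3} from Theorem~\ref{thm:error1} via the It\^o--Taylor expansion of \cite{gobet_labart} for the $\mathcal{R}^N=O(N^{-3})$ bound, the Gaussian-convolution / integration-by-parts transfer of regularity to $\overline{q}^N,\overline{z}^N$, and the $O(\Delta)$ pointwise estimates for $\overline{q}^N_i-\overline{y}(t_i,\cdot)$ and $\overline{z}^N_i-\overline{z}(t_i,\cdot)$ to move the left-hand side at cost $O(N^{-2})$.

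One small inaccuracy in your closing commentary (which does not affect the proof itself): the martingale term $\int_{t_i}^s\nabla g(u,W_u)\,dW_u$ has zero conditional expectation already given $\mathcal{F}_{t_i}$, since the stochastic integral starts at $t_i$; conditioning at $t_{i-1}$ rather than $t_i$ is therefore \emph{not} what makes it vanish. The genuine source of the extra factor of $\Delta$ over the na\"ive bound is simply that \emph{some} conditional expectation sits inside the square in the definition of $\mathcal{R}^N$, which removes the $O(\sqrt{\Delta})$ martingale fluctuation and leaves only the bounded drift $\mathcal{L}g$---the precise location of the conditioning ($t_{i-1}$ versus $t_i$) is immaterial for this step.
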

\noindent

Before we prove the theorems presented above in Section 6, we analyze how to optimally choose $\alpha$ and derive the resulting complexity of the algorithm. To this end, we first calibrate the algorithm in dependence of $\alpha$ to achieve a squared error of the order $N^{-2\theta}$
with $\theta \in \{1/2,1\}$  with local polynomials as basis functions, cp. \cite{lemor2006,gobet2016}.

For simplicity we use the same approximation space $\mathcal{K}_z$ for the approximation of $\overline{z}^N$ in each time step. Due to the additional projection error on the coarser time grid $\overline{\pi}_\alpha$  we distinguish between the time points inside and outside $\overline{\pi}_\alpha$ for the approximation of $\overline{q}^N$. We assume that the same  approximation space $\overline{\mathcal{K}}_q$ is used for all time points $t_i\in \overline{\pi}_\alpha$ while we use a possibly different approximation space $\mathcal{K}_q$ at the other time points. Analogously we assume that we use $M_i=M$ simulations  at each time point $t_i$ outside the coarser time grid $\overline{\pi}$ and $M_i=\overline{M}$ simulations otherwise. As already mentioned, we  assume that one segment $(X_i,\ldots, X_{\tau_\alpha(i)})$ of a path of $X$ can be simulated at the cost of $N^\alpha$.  Under these assumptions, assuming we can evaluate the driver and the basis functions at cost 1, the performed simulations and evaluations during the algorithm lead to costs of order  
\begin{align}
NN^\alpha M+ N^{1-\alpha} N^\alpha\overline{M}.
\end{align}
As basis functions we take local polynomials on cubes which we choose disjoint such that their union  contains the set $\{x\in \R^D: |x|<C_b\}$ for a constant $C_b>0$. We suppose that the edge length of the cubes is  $\overline{\delta}_q$ for the approximation of $\overline{q}^N$ on the time grid $\overline{\pi}_\alpha$, $\delta_q$ for the approximation of $\overline{q}^N$ at all other time points and  $\delta_z$ for the approximation of $\overline{z}^N$. Assuming that $\overline{y}$ and $\overline{z}$ are  $s+1$ times, respectively, $s$  times continuously differentiable with bounded derivatives we set the degree of the polynomials as $s$ for the approximation of $\overline{q}^N$ and $s-1$ for $\overline{z}^N$. We denote  the set of polynomials of degree less than or equal to $l$ by $P_l$. Then, the projection error in the context of 
Theorem \ref{thm:error2} can be estimated by a Taylor expansion on each cube:

\begin{eqnarray}\label{eq:2}
\inf_{\psi\in \mathcal{K}_q}E\left[|\psi(X_{t_i})-\overline{y}(t_i,X_{t_i})|^2\right]&\le & E\left[|\overline{y}(t_i,X_{t_i})|^2\mathds{1}_{|X_{t_i}|_>C_b}\right] \nonumber \\
&&+\sum_{H\in \mathcal{H}_{q}}\inf_{\psi\in P_{s}}E\left[|\psi(X_{t_i})-\overline{y}(t_i,X_{t_i})|^2\mathds{1}_{X_{t_i}\in H}\right]\\
&\le& \|\overline{y}(t_i,.)\|^2_\infty P\left(|X_{t_i}|>C_b\right)+c\|\overline{y}(t_i,.)^{(s+1)}\|_\infty^2(\delta_q^{s+1})^2, \nonumber
\end{eqnarray}
where $\mathcal{H}_q$ denotes the collection of cubes applied for the approximation of $\overline{q}^N$ at all time points outside $\overline{\pi}$. Under the assumption that $\sup_{0\le i\le N}E[e^{\varpi|X_{t_i}|}]<\infty$ for some $\varpi>0$, it follows by the Markov inequality that the choice  $C_b=2\theta \varpi^{-1}\log(N+1)$ ensures that the first term in \eqref{eq:2} is of order $N^{-2\theta}$. The same bound holds for the second term when choosing the edge length of the hypercubes as $\delta_q=cN^{-\frac{\theta}{s+1}}$. Therefore  it suffices to choose  $K_{q}$ of the order $N^{D\frac{\theta}{s+1}}\log^D(N+1)$ to ensure that
\begin{align}
\inf_{\psi\in \mathcal{K}_{q}}E[|\psi(X_i)-\overline{y}(t_i,X_{t_i})|^2]\in O(N^{-2\theta}). 
\end{align}
Following the same argumentation, we set 
\begin{align}
\overline{K}_{q}= cN^{D\frac{\theta+\frac{1-\alpha}{2}}{s+1}}\log^D(N+1),\quad  K_{z}= cN^{D\frac{\theta}{s}}\log^D(N+1)
\end{align}
for a positive constant $c$ to ensure 
\begin{align}
N^{1-\alpha}\inf_{\psi\in \overline{\mathcal{K}}_{q}}E\left[|\psi(X_i)-\overline{y}(t_i,X_{t_i})|^2\right]\in O(N^{-2\theta}), \quad 
\inf_{\psi\in \mathcal{K}_{z}}E\left[|\psi(X_i)-\overline{z}(t_i,X_{t_i})|^2\right]\in O(N^{-2\theta})
\end{align}
where the change from $s+1$ to $s$ in the number of basis functions in the approximation of $\overline{z}^N$ occurs due to the lower smoothness of $\overline{z}^N$ and $\overline{K}_{q}$ and $K_z$ denote the number of basis functions of the space $\overline{\mathcal{K}}_{q}$ and $\mathcal{K}_z$ respectively. 
The same argument applies in the context   of Theorem \ref{thm:error3}, replacing $\overline{y}$ and $\overline{z}$ by their discrete counterparts $\overline{q}^N$ and $\overline{z}^N$. 

Given the size of the approximation spaces we hence have to choose $M$ of the order
\begin{align}
N^{2\theta}\max\{K_q,NK_z\}=N^{1+2\theta}K_z
\end{align}
and $\overline{M}$ of the order
\begin{align}
N^{2\theta}\max\{N^{2-2\alpha}\overline{K}_q,NK_z\}
\end{align}
in order to bound the statistical error terms asymptotically by a multiple of $N^{-2\theta}$ (assuming that the driver $f$ is not independent of $Z$ and ignoring log-factors from now on). Then, in dependence of $\alpha$, the  computation costs of the algorithm grows as
\begin{align}
\mathcal{C}=\max\{N^{1+\alpha}M,N\overline{M}\} =:\max\{\mathcal{C}_\pi,\mathcal{C}_{\overline{\pi}}\}.
\end{align}
Here $\mathcal{C}_\pi$ is increasing in $\alpha$ and $\mathcal{C}_{\overline{\pi}}$ is decreasing in $\alpha$. The optimal choice of $\alpha$ is, thus, obtained by equating both terms, leading to
\begin{align}
\alpha_{opt}=\frac{\frac{1}{2}-\frac{\theta}{s}+\frac{s+1}{D}}{\frac{1}{2}+\frac{3(s+1)}{D}}.
\end{align}
It always lies in the open interval $(0,1)$, provided  $\vartheta=\frac{1}{2}$, $s\ge 1$ or $\vartheta=1$, $s\ge 2$. The resulting computational costs are of the order 
\begin{align}
\mathcal{C}=NN^{\alpha_{opt}} M & = N^{2+2\theta+D\frac{\theta}{s}+\frac{\frac{1}{2}-\frac{\theta}{s}+\frac{s+1}{D}}{\frac{1}{2}+\frac{3(s+1)}{D}}} = N^{3+2\theta+D\frac{\theta}{s}-\frac{\frac{\theta}{s}+\frac{2(s+1)}{D}}{\frac{1}{2}+\frac{3(s+1)}{D}}}.
\end{align}
In comparison, choosing $\alpha=1$ (the MDP case), the computational costs are of the order
\begin{align}
\mathcal{C}=cN^{3+2\theta+D\frac{\theta}{s}},
\end{align}
as already shown in \cite{gobet2016}. Hence, the suggested SDP algorithm with the optimal choice of the segment length reduces the complexity by a factor of the order 
\begin{align}
N^{1-\alpha_{opt}}=N^{\frac{\theta/s+2(s+1)/D}{1/2+3(s+1)/D}}
\end{align}
depending on the smoothness $s$ of the problems and the dimension $D$ of the state space of $X$. 
\section{Numerical example}
In this section, we  compare the SDP and MDP approach in a numerical test case in order to illustrate our theoretical results.\\
For this purpose, we define for each $x\in \R^D$ and all $t\in [0,0.2]$ the function
\begin{align}
\varphi(t,x):=\exp\left(-\sum_{d=1}^D|x^{(d)}-t|^{0.3}\right)\sum_{d=1}^D\left(x^{(d)}-t\right)^2
\end{align}
and for $d=1,\ldots, D$
\begin{align} 
\phi_d(t,x):=\exp\left(-\sum_{d=1}^D|x^{(d)}-T|^{0.3}\right)\left(x^{(d)}-t\right)\left(2-0.3|x^{(d)}-t|^{0.3}\right)\sum_{e=1, e\neq d}^D\left(x^{(d)}-t\right)^2.
\end{align}
We then consider the BSDE driven by a Brownian motion $X=W$ with terminal time $T=0.2$, the terminal condition 
\begin{align}
\xi(W_T)= \varphi(T,W_T)
\end{align}
and driver
\begin{align}
f(t,x,y,z):=\min\{|z|,c\}-|\nabla \varphi(t,x)|-(\partial_t+\frac{1}{2}\triangle)\varphi(t,x).
\end{align}
for $c:=\sup_{(t,x)\in [0,T]\times\mathbb{R}^D} |\nabla \varphi(t,x)|$, noting that $\varphi$ is bounded and twice continuously differentiable with bounded derivatives.
It can be easily checked by  It\^o's formula that the analytic solution to this BSDE is given by
\begin{align}
Y_t&=\varphi(t,W_t)\\
Z^{(d)}_t&=\frac{\partial}{\partial x^{(d)}}\varphi(t,W_t)=\phi_d(t,W_t)\quad d=1,\ldots, D,
\end{align}
and that the standing assumptions are satisfied.

For a comparison between the MDP algorithm and the SDP algorithm,  we calibrate the function basis and the number of sample paths as functions of the number of time steps $N$ in line with the complexity analysis of the previous section with $\vartheta=1/2$ and $s=1$. We increase the number of time steps $N$ and re-run each algorithm 40 times in dimension $D=2$. Below we report the mean-squared errors and the average run times of both algorithms across the 40 repetitions.  More precisely, we proceed in the following way:
\\[0.2cm]
\emph{Calibration:}\\
We use piecewise linear functions for the approximation of $\overline{q}^N_i$ and piecewise constant functions for the one of $\overline{z}^N_i$ in both algorithms. At each time, we set the outer bound of the hypercubes as a multiple of the standard deviation of the Brownian motion at that time. More precisely, we choose  $C_{b,i}:=(2\log(N)+2)\sqrt{t_i}$ at the time $t_i$.  
As edge length of the cubes we choose $\delta_z=\sqrt{T}/N^{\frac{1}{2}}$ and $\delta_q=\sqrt{T}/N^{\frac{1}{4}}$ for the MDP algorithm leading  to $K_{z,i}\in \mathcal{O}(\lceil R_i/\delta_z\rceil^D)=\mathcal{O}(N\log^2(N))$ and $K_{q,i}\in \mathcal{O}(\lceil R_i/\delta_q\rceil^D)=\mathcal{O}(N^{1/2}\log^2(N))$ at time $t_i$. We re-simulate the sample paths for the approximation of each conditional expectation and use $M_{q,i}=10NK_{q,i}\in\mathcal{O}(N^{3/2}\log^2(N))$ simulations for $Q$ at time $t_i$ and $M_{z,i}=5N^2K_{z,i}\in\mathcal{O}(N^{3}\log^2(N)) $ simulations for $Z$.\\
We may choose $\delta_z$, $K_{z,i}$ and $M_{z,i}$ for the SDP algorithm in the same way as in the MDP algorithm as well as $\delta_q$ and $K_{q,i}$ and $M_{q,i}$, if $t_i\not \in \overline{\pi}$. The  optimal value of the segment length parameter is $\alpha_{opt}=\frac{2}{7}$. For the time points in $\overline{\pi}$ we  choose $\overline{\delta_q}=1.5\sqrt{T}/N^{-\frac{1}{2}+\frac{\alpha}{4}}$ leading to $\overline{K}_{q,i}\in \mathcal{O}(N^{1-1/7}\log^2(N))$. As number of simulations for the approximation of $Q$ at these time points we choose $\overline M_{q,i}=N^{3-2\alpha}\overline{K}_{q,i}\in \mathcal{O}(N^{3+2/7}\log^2(N))$.
\\[0.2cm]
\emph{Measuring the errors:}\\
We measure the mean-squared error (MSE) of both algorithms for the approximation of $\overline{q}^N_i$ and $\overline{z}^N_i$ separately and consider the average error over all the time steps. Additionally, we report the approximation error of $Y$ at time zero, which is a relevant quantity in many applications such as nonlinear option pricing. Precisely, we consider the average over the 40 repetitions of the three indicators
\begin{align}
\mathcal{C}_{y,av}&:=\frac{1}{N}\sum_{i=1}^{N-1}\sum_{H\in \mathcal{H}_{q,i}}\big|q^{M,N}_i(\Theta_{H})-\overline{y}(t_i,\Theta_{H})\big|^2P\big(X_{t_i}\in H\big)\\
\mathcal{C}_{y,0}&:=\big|q^{M,N}_0(0)-\overline{y}(0,0)\big|^2\\
\mathcal{C}_{z,av}&:=\frac{1}{N}\sum_{i=0}^{N-1}\sum_{H\in \mathcal{H}_{z,i}}\big|z^{M,N}_i(\Theta_{H})-\overline{z}(t_i,\Theta_{H})\big|^2P\big(X_{t_i}\in H\big)
\end{align}
where $\Theta_{H}$ is the center of the cube $H$, and $\mathcal{H}_{q,i}$ and $\mathcal{H}_{z,i}$ are the sets of  cubes used at time $t_i$ for the approximation of $\overline{q}^N_i$ and $\overline{z}^N_i$, respectively. We refer to the arithmetic mean of $\mathcal{C}_{y,av}$ and $\mathcal{C}_{z,av}$ over the 40 repetitions as average mean-squared error in time and analogously to the arithmetic mean of $\mathcal{C}_{y,0}$ over the 40 repetitions as mean-squared error  at time 0.

\begin{figure}[h!]
\begin{subfigure}{.5\textwidth}
  \centering
  \includegraphics[width=.8\linewidth]{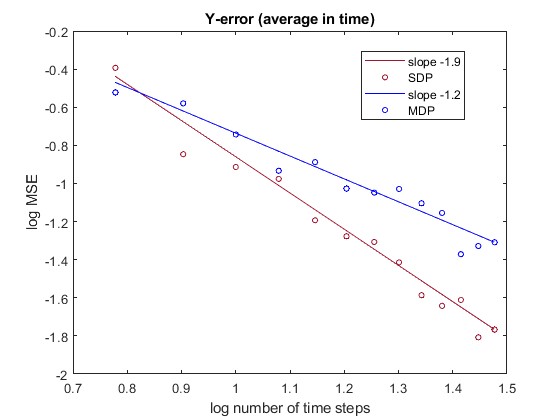}
  \caption{average mean-squared error in time for $Y$.}
  \label{fig:sfig1}
\end{subfigure}%
\begin{subfigure}{.5\textwidth}
  \centering
  \includegraphics[width=.8\linewidth]{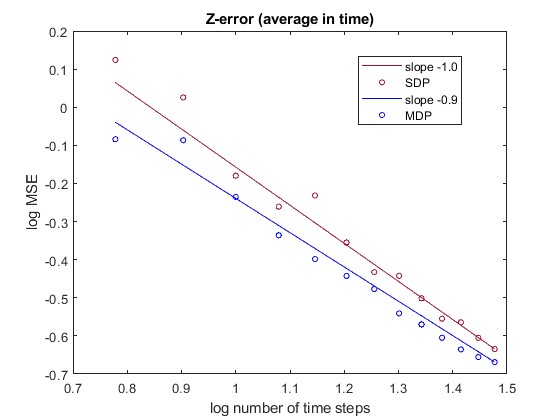}
\caption{average mean-squared error in time for $Z$.}
  \label{fig:sfig2}
\end{subfigure}
\\
\begin{subfigure}{.5\textwidth}
  \centering
  \includegraphics[width=.8\linewidth]{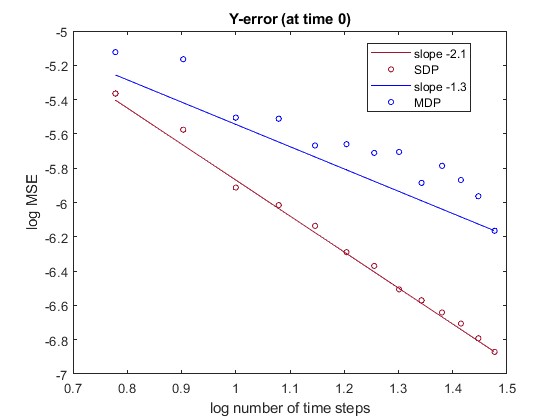}
  \caption{mean-squared error for $Y$ at 0.}
  \label{fig:sfig3}
\end{subfigure}%
\begin{subfigure}{.5\textwidth}
  \centering
  \includegraphics[width=.8\linewidth]{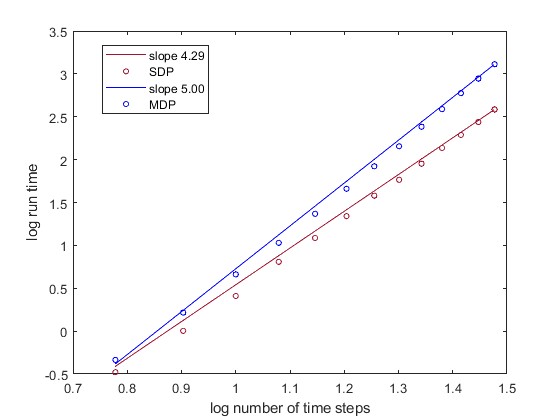}
  \caption{average run time.}
  \label{fig:sfig4}
\end{subfigure}
\caption{Numerical results in dimension $D=2$.}\label{fig:fig}

\end{figure}
\ \\[0.1cm]
\emph{Numerical results:}
\\
Figure \ref{fig:fig} depicts $\log_{10}$-$\log_{10}$-plots of the mean-squared errors (as introduced above) and the average run times of both algorithms as the number $N$ of time discretization points increases from $N=6$ to $N=30$. In view of the results in Theorem \ref{thm:error2} and the calibration of the algorithms we expect that the mean-squared errors decrease at the order $N^{-1}$. The numerical values of the three error indicators in Figures \ref{fig:sfig1}--\ref{fig:sfig3} (blue circles) for the MDP scheme are largely in accordance with this expected convergence behavior. The numerical results for the SDP scheme (red circles) feature a convergence rate of $N^{-1}$ for the average mean-squared error of the $Z$-component (Figure \ref{fig:sfig2}) but a faster convergence rate of about $N^{-2}$ for the mean-squared errors of the $Y$-components (Figures \ref{fig:sfig1} and \ref{fig:sfig3}). A possible explanation of this excellent empirical convergence rate are potential variance benefits when running empirical regressions on shorter time segments. Figures \ref{fig:sfig1} and \ref{fig:sfig3} also illustrate a higher variance of the $Y$-approximations of the MDP scheme across the repetitions compared to the SDP scheme. Finally, the average run time over the repetitions are plotted against the number of time discretization points in Figure \ref{fig:sfig4}. As before, the blue circles and red circles mark the numerical results for the MDP algorithm and the SDP algorithm, respectively. The solid lines correspond to the expected run time behavior of $N^5$ for the MDP scheme and $N^{4+2/7}$ for the SDP scheme derived from the complexity analysis at the end of Section~\ref{sect:conv}. Taking into account that log-factors, which have been neglected throughout the complexity analysis, play a role for small values of $N$ in both algorithms, Figure \ref{fig:sfig4} confirms the theoretical run time benefits of the SDP scheme in the practical implementation.

Note that this example is only supposed to serve as a proof of concept of the improved complexity of the SDP scheme compared to the MDP scheme. Parallelization and variance reduction techniques can be incorporated analogously to the MDP case and are of prime importance when implementing regression on local polynomials for higher dimensional problems (of around $D=10$), see, e.g., \cite{Gobet_al_16}.

\section{Error analysis}
In this section, we present a complete and detailed error analysis of the SDP algorithm. In the first subsection, we introduce additional notation used in the proofs and present some key tools for the error analysis. The following subsections are then dedicated to the derivation of error bounds for the approximation of  $\overline{q}^N$ and $\overline{z}^N$ respectively.  We will establish a sort of recursion formula for both parts, allowing us to bound the quadratic error at the time $t_i$ by the one at time $t_ {i+1}$ plus an additional driver-dependent term. This illustrates the error propagation between the time steps. We will then derive global bounds for the quadratic error for both approximations before we analyze the driver-dependent terms appearing in both recursions further in Subsection 6.5. Finally, in the last subsection, we combine the obtained bounds to derive the result presented in Theorem \ref{thm:error1}.
\subsection{Preliminaries and key tools}
In this section, we discuss some ramifications for the error analysis by introducing additional objects and presenting key tools for the analysis. \\
First, recall the definition of the functions $\overline{q}^N_i$ and $\overline{z}^N_i$. By setting
\begin{align}
\Xi^N_{N-1}(\underline{x}_{N-1})&:=\xi(x_N) \\
\Xi^N_{i}(\underline{x}_i)&:=\overline{q}^N_{\tau_\alpha(i)}(x_{\tau_\alpha(i)})+\int_{t_{i+1}}^{t_{\tau_\alpha(i)+1}}f(s,x_s,\overline{y}(s,x_s),\overline{z}(s,x_s))ds  &i\in \{0,\ldots, N-2\}
\end{align}
for any $\underline{x}_i:=(x_s)_{t_i\le s\le T}\in (\mathbb{R}^{D})^{[t_i,T]}$, it holds
\begin{align}
\overline{q}^N_i(x)=E\left[\Xi^N_i\left((X_s)_{t_i\le s\le T}\right)\middle|X_{t_i}=x\right]
\end{align}
and
\begin{align}
\overline{z}^N_i(x)=E\left[\frac{\Delta W_{i+1}}{\Delta}\Xi^N_i\left((X_s)_{t_i\le s\le T}\right)\middle|X_{t_i}=x\right].
\end{align}
Note that $\Xi^N_i$ differs from $\Xi^{N,M}_i$ in two ways: in the functions $\Xi^{N,M}_i$, the true solution $(Y,Z)$ of the BSDE  is replaced by approximations of the algorithm and the integral is discretized. In order to make use of properties of the least squares projection, we need the analogs of the least squares solutions $\varphi^{q^{N,M}}$ and $\varphi^{z^{N,M}}$ based on the functions $\Xi^N$. Since those  depend on the whole path of $X$ rather than just the values on the time grid $\pi$ additional fictitious simulations are required for the theoretical error analysis which motivates the following definition:
\begin{defi}\label{Simulations}
For $i\in \{0,\ldots, N-1\}$,  let $\mathcal{S}_i:=\{\Delta W^{[i,m]}_{i+1},X^{[i,m]}: m=1,\ldots, M\}$ be a cloud of independent random variables defined on the probability space $(\Omega^M,\mathcal{F}^M,P^M)$ with $X^{[i,m]}=(X^{[i,m]}_s)_{t_i\le s\le T}$  such that $X^{[i,m]}$ is distributed like a segment of the SDE solution $X$. Furthermore, we assume that these simulations match the ones used in the SDP algorithm on the time grid $\pi$, i.e., $\Delta W_{i+1}^{[i,m]}=\Delta W_{i+1}^{[i,m,N]}$ and $X^{[i,m]}_{t_j}=X^{[i,m,N]}_{t_j}$  for all $i\in\{0,\ldots, N-1\}$, $j\ge i$ and $m\in \{1,\ldots,M_i\}$. Then, for every $\omega\in \Omega^M$, let $\nu^M_i(\omega,.)$ be the measure on $\left((\mathbb{R}^{D})^{[t_i,T]}\times \mathbb{R}^\mathcal{D}, \mathcal{B}\left((\mathbb{R}^{D})^{[t_i,T]}\times \mathbb{R}^\mathcal{D}\right)\right)$ defined by 
\begin{align}
\nu^M_i(\omega,B):=\frac{1}{M_i}\sum_{m=1}^{M_i}\delta_{\left(\Delta W^{[i,m]}_{i+1}(\omega),X^{[i,m]}(\omega)\right)}(B)
\end{align}
where $\delta_c(.)$ is the Dirac-measure on $c$.
\end{defi}
\noindent
The following calculations are done on the probability space $(\Omega^M, \mathcal{F}^M,P^M)$, where we suppose that there exists a $\mathcal{D}$-dimensional Brownian motion $W$ on $(\Omega^M, \mathcal{F}^M,P^M)$, which is independent of all simulations and hence also a copy $X$ of the SDE solution independent of the simulations. \\
Given these additional random variables, we denote with $\varphi^{\overline{q}^N}_i$ and $\varphi^{\overline{z}^N}_i$ the solutions of the least-squares problems 
\begin{align}
\varphi^{\overline{q}^N}_i:=\argmin_{\psi\in \mathcal{K}_{q,i}}\left(\frac{1}{M}\sum_{m=1}^{M}\left|\psi\left(X_{t_i}^{[i,m]}\right)-\Xi^N_{i}\left(X^{[i,m]}\right)\right|^2\right)
\end{align} 
and
\begin{align}
\varphi^{\overline{z}^N}_i:=\argmin_{\psi \in \mathcal{K}_{z,i}}\left(\frac{1}{M}\sum_{m=1}^{M_i}\left|\psi\left(X_{t_i}^{[i,m]}\right)-\frac{\Delta W_{i+1}^{[i,m]}}{\Delta}\Xi^N_{i}\left(X^{[i,m]}\right)\right|^2\right).
\end{align}
\begin{remark}
Recall the functions $\varphi_i^{q^{N,M}}$ and $\varphi_i^{z^{N,M}}$ from the SDP algorithm. Since it holds by definition of the ghost sample that $X^{[i,m]}_\pi:=(X^{[i,m]}_{t_j})_{t_i< t_j\in \pi}=X^{[i,m,N]}$, we have 
\begin{align}
 \varphi_i^{q^{N,M}}=\argmin_{\psi\in \mathcal{K}_{q,i}}\left(\frac{1}{M}\sum_{m=1}^{M}\left|\psi\left(X^{[i,m]}_{t_i}\right)-\Xi^{N,M}_{i}\left(X^{[i,m]}_\pi\right)\right|^2\right)
\end{align}
and
\begin{align}
\varphi_i^{z^{N,M}}=\argmin_{\psi \in \mathcal{K}_{z,i}}\left(\frac{1}{M}\sum_{m=1}^{M_i}\left|\psi\left(X^{[i,m]}_{t_i}\right)-\frac{\Delta W_{i+1}^{[i,m]}}{\Delta}\Xi^{N,M}_{i}\left(X^{[i,m]}_\pi\right)\right|^2\right),
\end{align}
i.e., for any fixed outcomes of the simulations $X^{[i,m]}$, both pairs of functions $\varphi_i^{q^{N,M}}$ and $\varphi_i^{z^{N,M}}$, and $\varphi^{\overline{q}^N}_i$ and $\varphi^{\overline{z}^N}_i$ solve a least squares problem with respect to the same measure $\nu_i^{M}(\omega,.)$.
\end{remark}
\noindent
This allows us to utilize the following lemma, which is a key tool in the error analysis. It matches essentially Proposition 4.12 in \cite{gobet2016} where the domain of the function $\Xi$ is generalized in order to cover our setting. The proof presented in \cite{gobet2016} still holds for this setting.  
\begin{lem}\label{lem:projection}
For each $\omega\in \Omega^M$, let $(A,\mathcal{A},\nu(\omega,.))$ be a measurable space with  
\begin{align}
\nu(\omega,.)=\frac{1}{M}\sum_{m=1}^{M}\delta_{\chi^{[m]}(\omega)}
\end{align}
for $i.i.d$ random variables $\chi^{[1]}\ldots, \chi^{[M]}:\Omega^M\rightarrow A$. Furthermore, let $\mathcal{K}$ be a linear function space spanned by $\mathbb{R}^l$-valued basis functions $\{p^{k}(.), 1\le k\le K\}$ with $\sum_{k=1}^{K}E\left[|p^k(\chi^{[m]})|^2\right]<\infty$ for all $m$. For any  $\mathcal{F}^M \otimes\mathcal{A}$-measurable, $\mathbb{R}^l$-valued random variable $\Xi$ with $\Xi(\omega,.)\in L^2\left(\mathcal{A},\nu(\omega,.)\right)$ for $P^M$-a.e. $\omega$, set
\begin{align}
\varphi(\omega,.):= \arginf _{\psi\in \mathcal{K}}\frac{1}{M}\sum_{m=1}^{M}\left|\psi\left(\chi^{[m]}(\omega)\right)-\Xi\left(\omega,\chi^{[m]}(\omega)\right)\right|^2.
\end{align}
Then:
\begin{itemize}
\item[(i)] The mapping $\Xi\mapsto \varphi$ is linear.
\item[(ii)] It holds 
\begin{align}
\|\varphi\|_{L^2\left(\mathcal{A},\nu(\omega,.)\right)}\le \|\Xi\|_{L^2\left(\mathcal{A},\nu(\omega,.)\right)}
\end{align}
where we denote with $\|.\|_{L^2\left((\mathcal{A},\nu(\omega,.)\right)}$ the $L^2$-norm with respect to the measure $\nu(\omega,.)$.
\item[(iii)] Suppose $\mathcal{G}$   is a sub-$\sigma$-field of $\F^M$ such that $\left(p^k(\chi^{[1]}),\ldots, p^k(\chi^{[M]})\right)$ is $\mathcal{G}$-measurable for each $k=1,\ldots, K$. Then 
\begin{align}
E\left[\varphi\middle|\mathcal{G}\right](\omega,.)=\argmin_{\psi\in \mathcal{K}}\frac{1}{M}\sum_{i=1}^{M}\left|\psi\left(\chi^{[m]}(\omega)\right)-\Xi_{\mathcal{G}}\left(\chi^{[m]}(\omega)\right)\right|^2
\end{align}
where $\Xi_{\mathcal{G}}(x):=E\left[\Xi(x)\middle|\mathcal{G}\right]$.
\item[(iv)] 
In the situation of $(iii)$, suppose that $\mathcal{G}$ is given by $\sigma(g(\chi^{[m]})_{m=1,\ldots, M})$ for a $\mathcal{A}$-measurable function 
\begin{align}
g:A&\rightarrow \mathbb{R}^{l'}.
\end{align}
Furthermore, assume that there is a sub-$\sigma$-field $\mathcal{H}$ independent of $\sigma((\chi^{[m]})_{m=1\ldots,M})$ such that $\Xi$ is $\mathcal{H}\otimes\mathcal{A}$-measurable and that  the conditional variance 
\begin{align}
E\left[\left|\Xi\left(\chi^{[m]}\right)-E\left[\Xi\left(\chi^{[m]}\right)\middle|\mathcal{G}\vee \mathcal{H}\right]\right|^2\middle|\mathcal{G}\vee\mathcal{H}\right]
\end{align}
is  $P^M$-almost surely uniformly bounded by some constant $\sigma^2$ for all $m\in \{1,\ldots, M\}$. Then 
\begin{align}
E\left[\|\varphi-E\left[\varphi\middle|\mathcal{G}\vee \mathcal{H}\right]\|^2_{L^2\left(\mathcal{A},\nu(\omega,.)\right)}\middle|\mathcal{G}\vee \mathcal{H}\right]\le \sigma^2\frac{K}{M}.
\end{align}
\end{itemize}
\end{lem}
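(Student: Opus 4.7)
The plan is to view the argmin pathwise as the orthogonal projection of $\Xi(\omega,\cdot)$ onto $\mathcal{K}$ inside the (possibly degenerate) Hilbert space $L^2(\mathcal{A},\nu(\omega,\cdot))$. To kill the non-uniqueness that can arise when the empirical Gram matrix $P^\top P$ (with $P_{mk}=p^k(\chi^{[m]}(\omega))$) is singular, I fix a canonical measurable selection via the Moore--Penrose pseudo-inverse, writing $\varphi=\sum_k\hat\beta_k p^k$ with $\hat\beta=(P^\top P)^+ P^\top Y$ and $Y_m=\Xi(\omega,\chi^{[m]}(\omega))$. Parts (i) and (ii) are then immediate: linearity of $Y\mapsto\hat\beta\mapsto\varphi$ is visible in the formula, and (ii) is Pythagoras in $L^2(\nu)$ applied pathwise to the orthogonal decomposition of $\Xi$ using $0\in\mathcal{K}$.

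For (iii), $P$ is $\mathcal{G}$-measurable by hypothesis, so $(P^\top P)^+ P^\top$ is a $\mathcal{G}$-measurable linear operator acting on $Y$. Conditional expectation therefore commutes with its action, giving $E[\hat\beta\mid\mathcal{G}]=(P^\top P)^+ P^\top E[Y\mid\mathcal{G}]$. To finish the identification of $E[\varphi\mid\mathcal{G}]$ with the projection of $\Xi_\mathcal{G}$, one needs the freezing identity $E[\Xi(\chi^{[m]})\mid\mathcal{G}]=\Xi_\mathcal{G}(\chi^{[m]})$. In the applications of interest (and in the specific setup of (iv)), $\mathcal{G}$ is generated by functions of the $\chi^{[m]}$'s, and a standard disintegration argument delivers this freezing.

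For (iv), decompose $Y_m=E[Y_m\mid\mathcal{G}\vee\mathcal{H}]+\tilde\eta_m$, where $\tilde\eta_m$ is the hypothesized residual, conditionally centered with conditional variance at most $\sigma^2$. Since $\mathcal{H}$ is independent of $\sigma((\chi^{[m]})_m)$, $P$ remains $\mathcal{G}\vee\mathcal{H}$-measurable, so the linear commutation used in (iii) applies again and $\varphi-E[\varphi\mid\mathcal{G}\vee\mathcal{H}]$ is itself the projection of the centered noise $\tilde\eta$. I then apply Gram--Schmidt to $p^1,\ldots,p^K$ with respect to $\nu(\omega,\cdot)$; because the Gram--Schmidt coefficients depend only on inner products of the form $\frac{1}{M}\sum_m p^k(\chi^{[m]})p^l(\chi^{[m]})$, which are $\mathcal{G}$-measurable, the resulting $\nu$-orthonormal family $\{e_k\}_{k=1}^{K'}$ with $K'\le K$ is $\mathcal{G}$-measurable when evaluated on the $\chi^{[m]}$'s. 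The quantity to be bounded then equals $\sum_k\bigl(\frac{1}{M}\sum_m\tilde\eta_m e_k(\chi^{[m]})\bigr)^2$. Conditioning on $\mathcal{G}\vee\mathcal{H}$ freezes the $e_k(\chi^{[m]})$, centers the $\tilde\eta_m$, and --- via the product structure $\mathcal{G}=\sigma(g(\chi^{[m]})_m)$ together with the independence of $\mathcal{H}$ from the $\chi^{[m]}$'s --- renders the $\tilde\eta_m$ conditionally uncorrelated. Expanding the squares, the cross-terms vanish and the diagonal part, combined with $\nu$-orthonormality of the $e_k$'s, leaves at most $\sigma^2/M$ per $k$; summing over $k$ gives the advertised $\sigma^2 K/M$.

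The main obstacle is part (iv): three measure-theoretic ingredients must be in place simultaneously --- a $\mathcal{G}$-measurable orthonormal basis obtained from Gram--Schmidt over the random empirical measure, a centered and conditionally uncorrelated noise vector produced by the product structure of $\mathcal{G}$ and the independence of $\mathcal{H}$, and a pseudo-inverse construction that is stable under conditioning. Part (iii) is conceptually lighter but carries the same freezing subtlety. Once these pieces are in place, the proof of Proposition~4.12 in \cite{gobet2016} transfers verbatim; the only novelty here is that the domain $A$ of $\Xi$ is the path space $(\mathbb{R}^D)^{[t_i,T]}$ rather than a Euclidean space, and this difference is purely notational.
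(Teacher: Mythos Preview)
Your proposal is correct and matches the paper's treatment exactly: the paper does not give its own proof but simply states that the lemma ``matches essentially Proposition~4.12 in \cite{gobet2016}'' and that ``the proof presented in \cite{gobet2016} still holds for this setting,'' the only change being that the domain of $\Xi$ is a path space rather than a Euclidean space. Your outline --- the Moore--Penrose representation for linearity and (ii), the $\mathcal{G}$-measurability of $(P^\top P)^+P^\top$ for (iii), and the Gram--Schmidt/conditional-uncorrelatedness argument for (iv) --- is precisely the argument in \cite{gobet2016}, and you correctly identify that the generalization to path-valued $\chi^{[m]}$ is purely notational.
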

\noindent
While Lemma \ref{lem:projection} and the objects defined so far help us to utilize projection properties, we still need tools to handle the dependency on the different sets of simulations used in the algorithm. For this purpose, we first consider the following norms allowing us to distinguish between the dependence on simulations or the actual law of the true SDE solution $X$ more clearly:
\begin{defi}
\label{defi:norms}
Let $\varphi:\Omega^{M}\times \mathbb{R}^D\rightarrow \mathbb{R}^l$ be $\mathcal{F}^{M}\times\mathcal{B}(\mathbb{R}^D)$-measurable. For each $i=0,\ldots, N-1$, define the random norms $\|.\|_{i,\infty}$ and $\|.\|_{i,M}$ via
\begin{align}
\|\varphi\|_{i,\infty}^2:=\int_{\mathbb{R}^D}|\varphi(x)|^2P_{X_{t_i}}(dx),\quad\quad\|\varphi\|_{i,M}^2:=\frac{1}{M}\sum_{m=1}^{M}\left|\varphi\left(X_{t_i}^{[i,m]}\right)\right|^2
\end{align}
where $P_{X_{t_i}}$ denotes the distribution of $X_{t_i}$.
\end{defi}
\noindent 
Note that we are interested in the error with respect to the law of the SDE solution $X$, i.e., in the difference between the approximations $q^{N,M}_i$ and  $z^{N,M}_i$, and the functions $\overline{q}^{N}_i$ and $\overline{z}^{N}_i$ respectively in the $\|.\|_{i,\infty}$-norm. The following lemma allows us to lead this difference back to the one in the $\|.\|_{i,M}$-norm that appears naturally in the error analysis due to the use of simulations. It is a straightforward adaptation of Proposition 4.10 in \cite{gobet2016} where the analogous result is shown for $\epsilon=1$ instead of $\epsilon \in (0,1]$. 
\begin{lem}
\label{lem:samplechange}
It holds for all $i=0,\ldots, N-1$ and any $\epsilon\in (0,1]$ that
\begin{align}
E\left[\|q_i^{N,M}-\overline{q}^N_i\|^2_{i,\infty}\right]&\le (1+\epsilon)E\left[\|q_i^{N,M}-\overline{q}^N_i\|^2_{i,M}\right]+\frac{C_1 K_{q,i}\log(C_2M_i)}{M_i\epsilon}\\
E\left[\|z_i^{N,M}-\overline{z}^N_i\|^2_{i,\infty}\right]&\le (1+\epsilon)E\left[\|z_i^{N,M}-\overline{z}^{N}_i\|^2_{i,M}\right]+\frac{\mathcal{D}C_1 K_{z,i}\log(C_2M_i)}{\Delta M_i\epsilon}
\end{align}
for positive constants $C_1,C_2$ independent of $\epsilon$, $\Delta$, and $M_i$.
\end{lem}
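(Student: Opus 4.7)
The plan is to adapt the proof of Proposition 4.10 in \cite{gobet2016}, which establishes the statement with the constant $2$ in place of $1+\epsilon$, by tracking where the Young-type splitting enters and replacing it with a weighted version. The two inequalities in the lemma are structurally identical up to a factor $1/\Delta$ in the $z$-part (coming from the factor $\Delta W_{i+1}/\Delta$), so I would treat the $q$-part in detail and indicate how the $z$-part follows from the same argument applied to the class $\{\mathcal{T}_{C_{z,i}} \circ \psi - \overline{z}_i^N : \psi \in \mathcal{K}_{z,i}\}$, whose members are bounded by $2C_{z,i} = 2C_{q,i}/\Delta$.

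First I would reduce to a uniform empirical-process bound. From the standing assumptions together with the backward recursion, both $|\overline{q}_i^N|$ and $|q_i^{N,M}|$ are bounded by $C_{q,i}$, so $q_i^{N,M}-\overline{q}_i^N$ lies in the class
\begin{align*}
\mathcal{F}_{q,i} := \left\{\mathcal{T}_{C_{q,i}} \circ \psi - \overline{q}_i^N : \psi \in \mathcal{K}_{q,i}\right\},
\end{align*}
which is uniformly bounded by $2C_{q,i}$ and is contained in an affine shift of a truncated $K_{q,i}$-dimensional linear space, hence a VC-subgraph class of dimension at most $c\,K_{q,i}$. The lemma then reduces to
\begin{align*}
E\left[\sup_{f \in \mathcal{F}_{q,i}}\bigl(\|f\|_{i,\infty}^2 - (1+\epsilon)\|f\|_{i,M}^2\bigr)_+\right] \le \frac{C_1 K_{q,i}\log(C_2 M_i)}{M_i\,\epsilon}.
\end{align*}

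Second, I would prove this deviation bound by a standard covering plus Bernstein argument. The VC-type estimate gives an $L^\infty$-cover of $\mathcal{F}_{q,i}$ of cardinality $(c_1 C_{q,i}/\delta)^{c_2 K_{q,i}}$, and for each fixed $f$ Bernstein's inequality yields, with probability at least $1-\eta$,
\begin{align*}
\|f\|_{i,\infty}^2 - \|f\|_{i,M}^2 \le \sqrt{\tfrac{8 C_{q,i}^2 \|f\|_{i,\infty}^2 \log(1/\eta)}{M_i}} + \tfrac{8 C_{q,i}^2 \log(1/\eta)}{3 M_i},
\end{align*}
using that $\mathrm{Var}(f^2(X_{t_i})) \le 4C_{q,i}^2\|f\|_{i,\infty}^2$. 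Applying the weighted Young inequality $2ab \le \epsilon a^2 + b^2/\epsilon$ to the square-root term (with $a$ proportional to $\|f\|_{i,\infty}$) absorbs a fraction $\epsilon$ of $\|f\|_{i,\infty}^2$ into the left-hand side at the cost of a remainder proportional to $\log(1/\eta)/(M_i\epsilon)$; a union bound over the cover and integration of the tail then produce the claimed right-hand side. This is precisely the point at which the Gobet--Turkedjiev argument, which uses $2ab \le a^2 + b^2$, must be recalibrated.

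The main obstacle is the bookkeeping in this final step: the cover resolution $\delta$ must be chosen as a function of $M_i$, $K_{q,i}$ and $\epsilon$ so that the cover-approximation error, the Bernstein linear term, and the absorbed variance term all remain of the prescribed order $K_{q,i}\log(M_i)/(M_i\epsilon)$, while $\epsilon$ enters only in the additive remainder and not in the constants $C_1, C_2$. Once this bookkeeping is carried out, the constants $C_1, C_2$ depend only on $C_{q,i}$ (hence on $C_\xi$, $C_f$, $T$) and are in particular independent of $\epsilon$, $\Delta$ and $M_i$, as required.
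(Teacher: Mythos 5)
Your overall strategy --- reduce to a uniform deviation bound over the truncated shifted linear class, cover, apply a scalar concentration inequality, and replace the unweighted split $2ab\le a^2+b^2$ of Gobet--Turkedjiev's Proposition 4.10 by $2ab\le \epsilon a^2+b^2/\epsilon$ so that the constant $2$ becomes $1+\epsilon$ at the price of a $1/\epsilon$ in the remainder --- is exactly the mechanism behind the lemma; the paper does not spell this out but defers to the cited proposition and relegates the $\epsilon$-adaptation to its supplement. However, your treatment of the $z$-part produces the wrong power of $\Delta$. You propose to run the identical argument on the class $\{\mathcal{T}_{C_{z,i}}\circ\psi-\overline{z}^N_i:\psi\in\mathcal{K}_{z,i}\}$ with sup-norm bound $B=2C_{z,i}=2C_{q,i}/\Delta$. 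But both the Bernstein linear term and the remainder created by the weighted Young inequality scale like $B^2\log(1/\eta)/(M_i\epsilon)$, so your route yields a remainder of order $C_{q,i}^2K_{z,i}\log(M_i)/(\Delta^2 M_i\epsilon)$ --- a factor $N=T/\Delta$ larger than the claimed $\mathcal{D}C_1K_{z,i}\log(C_2M_i)/(\Delta M_i\epsilon)$. The stated $\Delta^{-1}$ scaling requires the sharper a priori bound $|\overline{z}^{N,(d)}_i(x)|\le C_{q,i}\sqrt{2/(\pi\Delta)}$, obtained from $\overline{z}^{N,(d)}_i(x)=E\big[\tfrac{\Delta W^{(d)}_{i+1}}{\Delta}\,\Xi^N_i\,\big|\,X_{t_i}=x\big]$ together with $|\Xi^N_i|\le C_{q,i}$ and the first absolute moment of $\Delta W^{(d)}_{i+1}$; it is this effective $\Delta^{-1/2}$ bound on the relevant class (not the algorithmic truncation level $C_{q,i}/\Delta$) that produces the factor $\mathcal{D}/\Delta$ in Gobet--Turkedjiev. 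Saying the $z$-part ``follows from the same argument'' with $B=2C_{q,i}/\Delta$ therefore does not deliver the lemma as stated.

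A second, more technical but still genuine, defect is the covering step. A deterministic $L^\infty$-cover of $\mathcal{F}_{q,i}$ of cardinality $(c_1C_{q,i}/\delta)^{c_2K_{q,i}}$ does not exist in general: over an unbounded domain with merely square-integrable basis functions, already the one-parameter family $\{\mathcal{T}_C\circ(\lambda p)\}_{\lambda>0}$ has infinite sup-norm covering numbers. What the VC-subgraph property of a truncated $K$-dimensional linear space controls are the $L^1(\mu)$ covering numbers uniformly over probability measures $\mu$, so the argument has to pass through symmetrization and random (empirical) covers, as in Theorem~11.2 of Gy\"orfi, Kohler, Krzy\.zak and Walk, on which Proposition~4.10 of \cite{gobet2016} rests; Bernstein's inequality cannot be applied directly to data-dependent cover elements. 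This part is repairable with the standard machinery, but as written the covering-plus-union-bound step is not valid.
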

\noindent 
Finally, the following lemma allows us to reduce the dependency on a sampled path of $X$ to only the value of the sample at one time point $t_i$. A proof can be found in Chapter 5 of \cite{kallenberg}. 
\begin{lem}\label{lem:pathreduction}
Let $\mathcal{G}$ and $\mathcal{H}$ be independent sub-$\sigma$-fields of $\mathcal{F}^M$. For $l\ge1$ let $F:\Omega^M\times \mathbb{R}^D\rightarrow \mathbb{R}^l$ be bounded and $\mathcal{G}\times \mathcal{B}(\mathbb{R}^D)$-measurable and $U:\Omega^M\rightarrow \mathbb{R}^D$ be $\mathcal{H}$-measurable. Then $E[F(U)|\mathcal{H}]=j(U)$ where $j(x)=E[F(x)]$ for all $x\in \mathbb{R}^D$.
\end{lem}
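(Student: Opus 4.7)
The plan is to establish the identity by a functional monotone class argument, starting from product-form integrands and extending by measure-theoretic closure to the full class of bounded $\mathcal{G}\otimes\mathcal{B}(\mathbb{R}^D)$-measurable functions. It suffices to treat the scalar case $l=1$, since both $E[F(U)\mid\mathcal{H}]$ and $j(U)$ act componentwise, and the $\mathbb{R}^l$-valued case follows by reading the identity one coordinate at a time.

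First I would verify the identity for $F(\omega,x)=g(\omega)h(x)$, where $g$ is bounded and $\mathcal{G}$-measurable and $h$ is bounded Borel. Since $U$ is $\mathcal{H}$-measurable and $h$ is Borel, $h(U)$ is $\mathcal{H}$-measurable and can be pulled out of the conditional expectation; since $g$ is $\mathcal{G}$-measurable and $\mathcal{G}\perp\mathcal{H}$, one has $E[g\mid\mathcal{H}]=E[g]$. Hence
\begin{align*}
E\bigl[F(U)\,\big|\,\mathcal{H}\bigr]=h(U)\,E\bigl[g\,\big|\,\mathcal{H}\bigr]=h(U)\,E[g],
\end{align*}
while $j(x)=E[g\cdot h(x)]=h(x)\,E[g]$, so $j(U)=h(U)\,E[g]$ matches the left-hand side. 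By linearity the identity extends to all finite linear combinations of such products, in particular to indicators $\mathbf{1}_A(\omega)\mathbf{1}_B(x)$ with $A\in\mathcal{G}$ and $B\in\mathcal{B}(\mathbb{R}^D)$.

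Next I would invoke the functional monotone class theorem. The collection of product indicators $\{\mathbf{1}_A(\omega)\mathbf{1}_B(x):A\in\mathcal{G},B\in\mathcal{B}(\mathbb{R}^D)\}$ is a multiplicative system generating the product $\sigma$-field $\mathcal{G}\otimes\mathcal{B}(\mathbb{R}^D)$. The class of bounded measurable $F$ for which the identity holds is a vector space that contains the constants and is closed under bounded monotone limits: indeed, if $F_n\uparrow F$ with the $F_n$ uniformly bounded, then dominated convergence applied pointwise in $\omega$ yields $E[F_n(U)\mid\mathcal{H}]\to E[F(U)\mid\mathcal{H}]$, while Fubini's theorem (valid because each $F_n$ is bounded and jointly measurable) shows that $j_n(x)=E[F_n(\cdot,x)]$ is Borel and converges pointwise to the Borel function $j(x)=E[F(\cdot,x)]$, so $j_n(U)\to j(U)$ almost surely. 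The monotone class theorem therefore propagates the identity to every bounded $\mathcal{G}\otimes\mathcal{B}(\mathbb{R}^D)$-measurable $F$.

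The only delicate point is the bookkeeping that guarantees $j$ is Borel at every stage, so that $j(U)$ is $\mathcal{H}$-measurable; this is furnished by Fubini on the product-form layer and preserved through the monotone class extension. Beyond this measure-theoretic routine the argument contains no substantive obstacle, so I expect the proof to be short and direct.
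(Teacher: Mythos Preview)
Your monotone class argument is correct and is precisely the standard route to this ``freezing'' lemma. The paper does not supply its own proof of this statement; it simply cites Chapter~5 of Kallenberg, \emph{Foundations of Modern Probability}, where the result appears as a basic disintegration/independence lemma. Your proof is essentially what one finds there (or in any standard measure-theoretic probability text): verify the claim on product indicators, extend by linearity, then close under bounded monotone limits via the functional monotone class theorem, with Fubini providing the Borel measurability of $j$ throughout. There is nothing to add or correct.
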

\noindent 
To see how we can utilize this lemma, note that each sample $X^{[i,m]}$ satisfies
\begin{align}
X_{t_i+s}^{[i,m]}=X_{t_i}^{[i,m]}+\int_{t_i}^{t_i+s}b(l,X_l)dl+\int_{t_i}^{t_i+s}\sigma(l,X_l)dW^{[i,m]}_l
\end{align}
for a Brownian motion $W^{[i,m]}$. Substituting the time variable and setting $\tilde{W}_u=W^{[i,m]}_{u+t_i}-W^{[i,m]}_{t_i}$ leads to
\begin{align}
X_{t_i+s}^{[i,m]}=X_{t_i}^{[i,m]}+\int_{0}^{s}b(u+t_i,X_{u+t_i})du+\int_{0}^{t-s}\sigma(u+t_i,X_{u+t_i})d\tilde{W}_u
\end{align}
which shows that the sample $X^{[i,m]}$ is the solution to a forward SDE starting in $t_i$ with initial value $X_{t_i}^{[i,m]}$ with respect to the Brownian motion $\tilde{W}_u$. Hence, we can express the path $X^{[i,m]}$ as a deterministic function $h$ of  $X^{[i,m]}_{t_i}$ and $(\tilde{W}_{u})_{t_i\le u\le T}$ only, i.e., we can write $X^{[i,m]}_{s}=h(s,X_{t_i}^{[i,m]},(\tilde{W}_u)_{0\le u\le s-t_i})$ for any $s\ge t_i$.  Since the Brownian motion $\tilde{W}$ is independent of $\sigma(X^{[i,m]}_{t_i})$, we can then  use Lemma \ref{lem:pathreduction} on the function
\begin{align}
F(x_{t_i})&=\int_{t_{i+1}}^{t_{\tau(i)+1}}f\left(s,h(s,x_{t_i},(\tilde{W}_u)_{0\le u\le s-t_i}),Y_s,Z_s\right)ds\\
&\quad-\Delta\sum_{j={i+1}}^{\tau(i)}f\left(t_j,h(t_j,x_{t_i},(\tilde{W}_u)_{0\le u\le t_j-t_i}),q^{N,M}_j\left(h(t_j,x_{t_i},(\tilde{W}_u)_{0\le u\le t_j-t_i})\right),\right.\\
&\left.\quad z^{N,M}_j\left(h(t_j,x_{t_i},(\tilde{W}_u)_{0\le u\le t_j-t_i}))\right)\right)
\end{align}
and get
\begin{eqnarray} \label{eq:q1}
&& E\left[\Xi^N_i\left(X^{[i,m]}\right)-\Xi^{N,M}_i\left(X^{[i,m]}\right)\middle|\sigma(\mathcal{S}_j:j>i)\vee \sigma\left(X^{[i,m]}_{t_i}\right)\right]\nonumber\\
&=&E\left[\Xi^N_i\left(\underline{X}_{t_i}\right)-\Xi^{N,M}_i\left(\underline{X}_{t_i}\right)\middle|\sigma(\mathcal{S}_j:j>i), X_{t_i}=X^{[i,m]}_{t_i}\right]
\end{eqnarray}
where we set $\underline{X}_{t_i}=(X_s)_{s\in [t_i,T]}$. Completely analogously, it holds
\begin{eqnarray} \label{eq:z1}
&&E\left[\frac{\Delta W^{[i,m]}_{i+1}}{\Delta}\left(\Xi^{N}_i(X^{[i,m]})-\Xi^{N,M}_i(X^{[i,m]})\right)\middle|\sigma(\mathcal{S}_j:j>i)\vee \sigma\left(X^{[i,m]}_{t_i}\right)\right]\nonumber\\
&=&E\left[\frac{\Delta W^{[i,m]}_{i+1}}{\Delta}\left(\Xi^{N}_i(\underline{X}_{t_i})-\Xi^{N,M}_i(\underline{X}_{t_i})\right)\middle|\sigma(\mathcal{S}_j:j>i), X_{t_i}=X^{[i,m]}_{t_i}\right].
\end{eqnarray}
Hence, Lemma \ref{lem:pathreduction} allows us to reduce the dependency on a sample path to a conditional expectation according to the actual law of $X$ given the value of the sample path at the current time.\\
We close this section with some abbreviations for the notation:
Throughout the rest of this chapter, we denote with $\mathcal{F}^M_i:=\sigma(\mathcal{S}_k:~k> i)\vee \sigma(X^{[i,m]}_{t_i}: m=1,\ldots, M_i)$  the $\sigma$-field generated by the simulations used up to the time $k$ (backwards starting from $N$) for a fixed $N\in \mathbb{N}$. With $\mathcal{F}_i:=\sigma((W_s)_{0\le s\le t_i})$ we denote the $\sigma$-field generated by the Brownian motion which is independent of the simulations.  The conditional expectations given those $\sigma$-fields we denote with $E_i^M[.]=E[.|\mathcal{F}^M_i]$ and $E_i[.]:=E[.|\mathcal{F}_i]$.  Additionally, we shorten the notation of the driver $f$ by dropping clearly indicated function arguments through the notation $f(t,x,g,g'):=f(t,x,g(x),g'(x))$ for any  functions $g,g':\mathbb{R}^D\rightarrow \mathbb{R}^l$ and $f(t,x,g_t,g'_t):=f(t,x,(g(t,x),g'(t,x))$ for any functions $g,g':[0,T]\times \mathbb{R}^D\rightarrow \mathbb{R}^l$.\\
After these considerations we are ready for the derivation of the error bounds.
\subsection{Error of the Q approximation}
In this section, we analyze the expected quadratic error of the approximation of $\overline{q}^N$ in the form of the terms 
\begin{align}
E\left[\|q^{N,M}_i-\overline{q}^N_i\|^2_{i,M}\right].
\end{align}
We first establish a bound on the error propagation between the time steps, allowing us to express the expected difference of this term by the one in the next time step. Afterwards, we derive a local error bound for the term as well as a bound for the global error by bounding the maximum of the quadratic error terms  over all time points $t_i\in \pi$. The first part of the proof, in which we establish the error propagation, is  inspired by the error analysis of the MDP scheme in \cite{gobet2016}.\\
\emph{Error propagation:}
First, note that 
\begin{align}
E^M_i\left[\Xi^N_i\left(X^{[i,m]}\right)\right]&=E^M_i\left[\overline{q}^N_{\tau_\alpha(i)}\left(X^{[i,m]}_{t_{\tau_\alpha(i)}}\right)+\int_{t_{i+1}}^{t_{\tau_\alpha(i)+1}}f\left(s,X^{[i,m]}_s,\overline{y}_s,\overline{z}_s\right)ds\right]\\
&=E\left[\overline{q}^N_{\tau_\alpha(i)}\left(X_{t_{\tau_\alpha(i)}}\right)+\int_{t_{i+1}}^{t_{\tau_\alpha(i)+1}}f\left(s,X_s,Y_s,Z_s\right)ds\middle|X_{t_i}=X^{[i,m]}_{t_i}\right]\\
&=\overline{q}^N_i\left(X^{[i,m]}_{t_i}\right).\label{eq:bestapprox}
\end{align}
Hence, it holds by Lemma \ref{lem:projection} (iii) that $E^M_i[\varphi^{\overline{q}^N}_i]$ is the least squares projection of $\overline{q}^N_i$ on the subspace $\mathcal{K}_{q,i}$ with respect to the measure $\nu^M_i$, i.e., it holds 
\begin{align}
E^M_i\left[\varphi^{\overline{q}^N}_i\right]=\arginf_{\psi\in \mathcal{K}_{q,i}}\left(\frac{1}{M}\sum_{i=1}^M\left|\psi\left(X_{t_i}^{[i,m]}\right)-\overline{q}^N_i\left(X_{t_i}^{[i,m]}\right)\right|^2\right).
\end{align}
Therefore, by the properties of least square projections, $\overline{q}^N_i-E^M_i[\varphi^{\overline{q}^N}_{i}]$ is orthogonal on $E_i^M[\varphi^{\overline{q}^N}_{i}]-\psi^{q^{N,M}}_{i}$ with respect to $\nu^M_i$. Additionally, note that, since $\xi$ and $f$ are bounded due to the assumptions $(A_\xi)$ and $(A_f)$, it follows by a simple backward recursion that 
\begin{align}
|\overline{q}^N_i(x)|\le C_{q,i}=C_\xi+(T-t_{i+1})C_f\le C_q:=C_\xi+TC_f
\end{align}
for all $x\in \R^D$. We conclude that $\mathcal{T}_{C_{q,i}}(\overline{q}^N_i(x))=\overline{q}^N_i(x)$ and obtain:
\begin{align}
E\left[\|\overline{q}^N_i(.)-q_i^{N,M}(.)\|^2_{i,M}\right]	&=E\left[\left\|\mathcal{T}_{C_{q,i}}\left(\overline{q}^N_i(.)\right)-\mathcal{T}_{C_{q,i}}\left(\varphi^{q^{N,M}}_i(.)\right)\right\|^2_{i,M}\right]\\
&\le E\left[\left\|\overline{q}^N_i(.)-E^M_i\left[\varphi^{\overline{q}^N}_{i}(.)\right]+E_i^M\left[\varphi^{\overline{q}^N}_{i}(.)\right]-\varphi^{q^{N,M}}_{i}(.)\right\|^2_{i,M}\right]\\
&= E\left[\left\|\overline{q}^N_i(.)-E^M_i\left[\varphi^{\overline{q}^N}_{i}(.)\right]\right\|_{i,M}^2\right]+E\left[\left\|E_i^M\left[\varphi^{\overline{q}^N}_{i}(.)\right]-\varphi^{q^{N,M}}_{i}(.)\right\|^2_{i,M}\right]\\
&\le E\left[\left\|\overline{q}^N_i(.)-E_i^M\left[\varphi^{\overline{q}^N}_{i}(.)\right]\right\|^2_{i,M}\right]\\
&\quad+(1+\kappa)E\left[\left\|E_i^M\left[\varphi^{q^{N,M}}_{i}(.)-\varphi^{\overline{q}^N}_{i}(.)\right]\right\|_{i,M}^2\right]\\
&\quad +(1+\kappa^{-1})E\left[\left\|E_i^M\left[\varphi^{q^{N,M}}_{i}(.)\right]-\varphi^{q^{N,M}}_{i}(.)\right\|^2_{i,M}\right]
\end{align}
where  $\kappa$ is an arbitrary positive constant. We handle the terms separately:\\
As argued before, it holds
\begin{align}
E_i^M\left[\varphi^{\overline{q}^N}_{i}(.)\right]&=\arginf_{\psi\in \mathcal{K}_{q,i}} \|\overline{q}^N_i(.)-\psi(.)\|_{i,M}^2.
\end{align}
Hence,
\begin{align}
E\left[\left\|\overline{q}^N_i(.)-E_i^M\left[\varphi^{\overline{q}^N}_{i}(.)\right]\right\|^2_{i,M}\right]&=E\left[\inf_{\psi\in \mathcal{K}_{q,i}} \big\|\overline{q}^N_i(.)-\psi(.)\big\|_{i,M}^2\right]\\
&\le \inf_{\psi\in \mathcal{K}_{q,i}} \frac{1}{M}\sum_{i=1}^M E\left[\big|\overline{q}^N_i\left(X^{[i,m]}_{t_i}\right)-\psi\left(X^{[i,m]}_{t_i}\right)\big|^2\right]\\
&=\inf_{\psi\in \mathcal{K}_{q,i}} E\left[\left|\overline{q}^N_i(X_{t_i})-\psi(X_{t_i})\right|^2\right].
\end{align}
This term describes the best approximation error  due to the projection on the subspace $\mathcal{K}_{q,i}$ and is part of the final error representation.\\
For the next term, note that $\Xi^{N,M}_{i}$ is bounded by $C_{q,i}$ under assumptions $(A_\xi)$ and $(A_f)$, since the approximations $q_i^{N,M}$ are (due to the truncation in the algorithm).  Additionally, for each $i$, the function $\Xi^{N,M}_i$ is built using only the simulations in the sets $\mathcal{S}_k$ for $k>i$. Hence, it follows directly by Lemma \ref{lem:projection} (iv) with $\mathcal{H}=\sigma(\mathcal{S}_k, k> i)$ and $g(X^{[i,m]})=X_{t_i}^{[i,m]}$ that 
\begin{align}
E\left[\left\|E_i^M\left[\varphi^{q^{N,M}}_{i}(.)\right]-\varphi^{q^{N,M}}_{i}(.)\right\|^2_{i,M}\right]\le  C^2_{q,i}\frac{K_{q,i}}{M_i}.
\end{align}
Since, as argued before,  the functions $\overline{q}^N_i$ are bounded by $C_{q,i}$ as well, we can apply a similar argument  to the functions $\Xi^N_i$. Those are  again built using only the simulations in $\mathcal{S}_k$ for $k>i$. Setting $\xi^q_{i}(x):=E[\Xi^{N,M}_{i}(\underline{X}_{t_i})-\Xi^N_{i}(\underline{X}_{t_i})|X_{t_i}=x, \F_0^M]$, we have by \eqref{eq:q1} that 
\begin{align}
E_i^M\left[\Xi^{N,M}_{i}(X^{[i,m]})-\Xi^N_{i}(X^{[i,m]})\right]=\xi^q_{i}(X_i^{[i,m]}).
\end{align}
Then Lemma \ref{lem:projection} (i) and (iii) imply that 
\begin{align}
E_i^M\left[\varphi^{q^{N,M}}_{i}(.)-\varphi^{\overline{q}^N}_{i}(.)\right]=\arginf_{\psi\in \mathcal{K}_{q,i}}\left(\frac{1}{M}\sum_{i=1}^M\Big|\psi\left(X_{t_i}^{[i,m]}\right)-\xi^q_i\left(X_{t_i}^{[i,m]}\right)\Big|^2\right).
\end{align}
Hence, by Lemma \ref{lem:projection} (ii) it holds 
\begin{align}
E\left[\big\|E_i^M\left[\varphi^{q^{N,M}}_{i}(.)-\varphi^{\overline{q}^N}_{i}(.)\right]\big\|_{i,M}^2\right]\le E\left[\big\|\xi^q_i(.)\big\|_{i,M}^2\right]=E\left[\xi^q_i\left(X_{t_i}\right)^2\right].
\end{align}
Plugging in the estimates derived so far we have:
\begin{eqnarray}\label{eq:q2}
\begin{split}
E\left[\big\|\overline{q}^N_i(.)-q_i^{N,M}(.)\big\|^2_{i,M}\right]&\le  \inf_{\psi\in \mathcal{K}_{q,i}} E\left[\left(\overline{q}^N_i(X_{t_i})-\psi(X_{t_i})\right)^2\right]\\
&\quad+(1+\kappa^{-1})C^2_{q,i}\frac{K_{q,i}}{M_i} +(1+\kappa)E\left[\xi^q_i(X_{t_i})^2\right].
\end{split}
\end{eqnarray}
To further estimate $E[\xi^q_i(X_{t_i})^2]$,  we have to distinguish between the time points. If the time points $t_{i}$ and $t_{i+1}$ are in the same  segment  defined by $\overline{\pi}_\alpha$, i.e., at time points $t_i$ such that $t_{i+1}\not \in \overline{\pi}_\alpha$, it holds  $\tau_\alpha(i)=\tau_\alpha(i+1)$ by our choice of $\tau_\alpha$ and hence:
\begin{align}
\Xi^{N,M}_i\left(\underline{X}_{t_i}\right)&=q^{N,M}_{\tau_\alpha(i)}\left(X_{t_{\tau_\alpha(i)}}\right)+\sum_{j=i+1}^{\tau_\alpha(i)}\Delta f\left(t_j,X_{t_j},q_j^{N,M},z_j^{N,M}\right)\\
&=\Xi^{N,M}_{i+1}\left(\underline{X}_{t_{i+1}}\right)+\Delta f\left(t_{i+1},X_{t_{i+1}},q_{i+1}^{N,M},z_{i+1}^{N,M}\right)
\end{align}
and
\begin{align}
\Xi^{N}_i\left(\underline{X}_{t_i}\right)&=\overline{q}^N_{\tau_\alpha(i)}\left(X_{t_{\tau_\alpha(i)}}\right)+\int_{t_{i+1}}^{t_{\tau_\alpha(i)+1}}f\left(t,X_{t},Y_t,Z_t\right)dt\\
&=\Xi^{N}_{i+1}\left(\underline{X}_{t_{i+1}}\right)+\int_{t_{i+1}}^{t_{i+2}}f\left(t,X_{t},Y_t,Z_t\right)dt.
\end{align}
This allows us to estimate  $E[\xi^q_i(X_{t_i})^2]$ as:
\begin{align}
&E\left[\xi^{q}_i(X_{t_i})^2\right]\\
&=E\bigg[E\Big[\overline{q}^N_{\tau_\alpha(i)}\left(X_{t_{\tau_\alpha(i)}}\right)-q^{N,M}_{\tau_\alpha(i)}\left(X_{t_{\tau_\alpha(i)}}\right)\\
&\quad +\sum_{j=i+1}^{\tau_\alpha(i)}\int_{t_j}^{t_{j+1}}f\left(s,X_s,Y_s,Z_s\right)-f\left(t_j,X_{t_j},q_j^{N,M},z_j^{N,M}\right)ds\Big|\mathcal{F}_0^M,X_{t_i}\Big]^2\bigg]\\
&\le (1+\Gamma\Delta)E\bigg[E\Big[E\Big[\Xi^N_{i+1}(\underline{X}_{t_{i+1}})-\Xi^{N,M}_{i+1}(\underline{X}_{t_{i+1}})    \Big| \mathcal{F}^M_0, X_{t_i}, X_{t_{i+1}}\Big]\Big|\mathcal{F}_0^M,X_{t_i}\Big]^2\bigg]\\
&\quad + (1+\frac{1}{\Gamma\Delta})E\bigg[E\Big[\int_{t_{i+1}}^{t_{i+2}}f\left(s,X_s,Y_s,Z_s\right)-f\left(t_{i+1},X_{t_{i+1}},q_{i+1}^{N,M},z_{i+1}^{N,M}\right)ds\Big|\mathcal{F}_0^M,X_{t_i}\Big]^2\bigg]\\
&\le  (1+\Gamma\Delta)E\left[\left(\xi^q_{i+1}(X_{t_{i+1}})\right)^2\right]\\
&\quad + (1+\frac{1}{\Gamma\Delta})E\bigg[E\Big[\int_{t_{i+1}}^{t_{i+2}}f\left(s,X_s,Y_s,Z_s\right)-f\left(t_{i+1},X_{t_{i+1}},q_{i+1}^{N,M},z_{i+1}^{N,M}\right)ds\Big|\mathcal{F}_0^M,X_{t_i}\Big]^2\bigg].
\end{align}
Here we first used Young's inequality with some constant $\Gamma>0$ that will be specified later and the tower property of the conditional expectation in the first inequality. Then in the second step, we used Jensen's inequality, the Markov property of $X$ and once more the tower property of the conditional expectation. The calculation shows that the expected error term $E[\xi^{q}_i(X_i)^2]$ is bounded by the one in the next time step and a driver-dependent  term. \\
At time points at the end of a segment, i.e., for time points $t_i$ such that $t_{i+1}\in \overline{\pi}_\alpha$, the function $\Xi^{N,M}_i$ and $\Xi^{N,M}_{i+1}$ are defined on different segments and hence the first inequality in the calculations above does not hold true in this case.  However, to get a similar bound, we can once more use  Young's inequality and a zero addition to get 
\begin{align}
&E\left[\left(\xi^{q}_i(X_{t_i})\right)^2\right]\\
&=E\bigg[E\Big[\overline{q}^N_{\tau_\alpha(i)}\left(X_{t_{\tau_\alpha(i)}}\right)-q_{\tau_\alpha(i)}^{N,M}\left(X_{t_{\tau_\alpha(i)}}\right)\\
&\quad+\int_{t_{i+1}}^{t_{i+2}}f\left(s,X_s,Y_s,Z_s\right)-f\left(t_{i+1},X_{t_{i+1}},q_{i+1}^{N,M},z_{i+1}^{N,M}\right)ds\Big|\mathcal{F}_0^M,X_{t_i}\Big]^2\bigg]\\
&\le (1+\Gamma\Delta) E\Big[\big\|\overline{q}^N_{i+1}-q^{N,M}_{i+1}\big\|^2_{i+1,\infty}\Big]\\
&\quad + (1+\frac{1}{\Gamma\Delta}) E\bigg[E\Big[\int_{t_{i+1}}^{t_{i+2}}f\left(s,X_s,Y_s,Z_s\right)-f\left(t_{i+1},X_{t_{i+1}},q_{i+1}^{N,M},z_{i+1}^{N,M}\right)ds\Big|\mathcal{F}_0^M,X_{t_i}\Big]^2\bigg]\\
& \le (1+\Gamma \Delta)(1+\kappa)(1+\epsilon)E\left[ \left(\xi^{q}_{i+1}(X_{t_{i+1}})\right)^2\right]\\
&\quad +(1+\frac{1}{\Gamma\Delta})E\bigg[E\Big[\int_{t_{i+1}}^{t_{i+2}}f\left(s,X_s,Y_s,Z_s\right)-f\left(t_{i+1},X_{t_{i+1}},q_{i+1}^{N,M},z_{i+1}^{N,M}\right)ds\Big|\mathcal{F}_0^M,X_{t_i}\Big]^2\bigg]\\
&\quad +(1+\Gamma\Delta)\left(E\left[\big\|\overline{q}^N_{i+1}-q^{N,M}_{i+1}\big\|^2_{i+1,\infty}\right]-(1+\kappa)(1+\epsilon)E\left[\left(\xi^{q}_{i+1}(X_{t_{i+1}})\right)^2\right]\right)_+
\end{align}
with positive constants $\kappa$ and $ \epsilon$, which we will specify later. Again we have bounded the error term $E[\xi^q_i(X_{t_i})^2]$ by the one at the next time point and a driver-dependent term, but now with an additional error term that depends on the approximation  of $\overline{q}^N$ at the time point $t_{i+1}$. This additional error term could be expected since the SDP algorithm works on the segment containing $t_i$ like the MDP scheme where the correct terminal condition of the BSDE restricted to the corresponding time segment has been replaced by the approximation  $q^N_{\tau_\alpha(i)}$ at the time point $\tau_\alpha(i)=t_{i+1}$.\\
\emph{Local and global bounds:}\\
Iterating this step leads to the following local and global bounds for the quadratic error of the approximation of $\overline{q}^N$, that are stated in terms of  the expectations $E[\xi^q_i(X_i)^2]$ for later use. To obtain a corresponding bound for the terms $E[\|\overline{q}^{N}_i-q^{N,M}_i\|^2_{i,M}]$, one can simply follow the arguments that  lead to \eqref{eq:q2} and apply the lemma afterwards.
\begin{lem}\label{lem:Q}
For a positive constant $\Gamma$, set $\lambda_i:=(1+\Gamma\Delta)^i(1+N^{\alpha-1})^{2|\{j\le i: t_j\in \overline{\pi}_\alpha\}|}$ for $i\in \{0,\ldots,N\}$. It then holds under the standing assumptions that
\begin{align}
&E\left[\left(\xi_i^q(X_{t_i})\right)^2\right]\\
&\le \lambda_iE\left[\left(\xi_i^q(X_{t_i})\right)^2\right]\\
&\le (1+\frac{1}{\Delta\Gamma})\sum_{j=i}^{N-2}\lambda_{j}E\left[E\left[\int_{t_{j+1}}^{t_{j+2}}f\left(s,X_s,Y_s,Z_s\right)-f\left(t_{j+1},X_{t_{j+1}},q_{j+1}^{N,M},z_{j+1}^{N,M}\right)ds\middle|\F_0^M,X_{t_j}\right]^2\right]\\
&\quad +\lceil N^{1-\alpha}\rceil\lambda_N\sup_{j\in \mathcal{I}}\left(E\left[\big\|\overline{q}^N_{j}-q_{j}^{N,M}\big\|_{j,\infty}^2\right]-(1+N^{\alpha-1})^2E\left[\left(\xi^{q}_{j}(X_{t_{j}})\right)^2\right]\right)_+
\end{align}
for all $i\in \{0,\ldots, N-2\}$ and 
\begin{align}
&\max_{0\le i\le N-1}E\left[\left(\xi_i^q(X_{t_i})\right)^2\right] \\
&\le \sum_{j=0}^{N-2}(1+\frac{1}{\Delta\Gamma})\lambda_jE\left[E\left[\int_{t_{j+1}}^{t_{j+2}}f\left(s,X_s,Y_s,Z_s\right)-f\left(t_{j+1},X_{t_{j+1}},q_{j+1}^{N,M},z_{j+1}^{N,M}\right)ds\middle|\F_0^M,X_{t_j}\right]^2\right]\\
&\quad + \lceil N^{1-\alpha}\rceil\lambda_N\max_{j\in \mathcal{I}}\left((1+N^{\alpha-1})\left(\inf_{\psi\in \mathcal{K}_{q,j}} E\left[\left|\overline{q}^N_j(X_{t_j})-\psi(X_{t_j})\right|^2\right]+(1+N^{1-\alpha})\frac{C^2_{q,j}K_{q,j}}{M_j}\right)\right.\\
&\quad \left.+\frac{N^{1-\alpha}C_1K_{q,j}\log(C_2M_{j})}{M_{j}}\right).
\end{align}
where $\mathcal{I}:=\{i: t_i\in \overline{\pi}_\alpha\}$.
\end{lem}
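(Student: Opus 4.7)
The strategy is to iterate the two recursions established in the preceding ``Error propagation'' paragraphs and, to obtain the global bound, to eliminate the positive-part term by combining with inequality \eqref{eq:q2} and Lemma \ref{lem:samplechange}. Throughout, abbreviate $a_i:=E[(\xi^q_i(X_{t_i}))^2]$ and choose the free constants in the boundary recursion so that $(1+\kappa)(1+\epsilon)=(1+N^{\alpha-1})^2$.

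First, I unify the interior and boundary recursions. A direct computation from the definition of $\lambda_i$ gives $\lambda_{i+1}/\lambda_i=(1+\Gamma\Delta)(1+N^{\alpha-1})^{2\mathds{1}_{t_{i+1}\in\overline{\pi}_\alpha}}$, which is exactly the multiplier of $a_{i+1}$ in both cases, so the two recursions merge into
\[
\lambda_i a_i \le \lambda_{i+1}a_{i+1}+\lambda_i b_i+\mathds{1}_{t_{i+1}\in\overline{\pi}_\alpha}\,\lambda_i(1+\Gamma\Delta)\,c_{i+1},
\]
where $b_i$ denotes the driver-dependent term with prefactor $(1+1/(\Gamma\Delta))$ and $c_{i+1}$ denotes the nonnegative correction term at the boundary step. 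Since $\Xi^N_{N-1}=\Xi^{N,M}_{N-1}=\xi(X_{t_N})$, one has $a_{N-1}=0$; telescoping from $j=i$ up to $j=N-2$ then yields
\[
\lambda_i a_i \le \sum_{j=i}^{N-2}\lambda_j b_j+\sum_{\substack{j=i\\ t_{j+1}\in\overline{\pi}_\alpha}}^{N-2}\lambda_j(1+\Gamma\Delta)\,c_{j+1}.
\]
Because $|\mathcal{I}|\le\lceil N^{1-\alpha}\rceil$ and $\lambda_j(1+\Gamma\Delta)\le\lambda_{j+1}\le\lambda_N$ whenever $t_{j+1}\in\overline{\pi}_\alpha$, the second sum is dominated by $\lceil N^{1-\alpha}\rceil\,\lambda_N\,\sup_{k\in\mathcal{I}} c_k$; combined with the trivial $a_i\le \lambda_i a_i$ (as $\lambda_i\ge 1$), this gives the first displayed chain of the lemma.

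For the global bound I take the maximum over $i$ (using $\sum_{j=i}^{N-2}\le\sum_{j=0}^{N-2}$) and replace the positive-part term $c_k$ by an explicit estimate. For $k\in\mathcal{I}$, apply Lemma \ref{lem:samplechange} with $\epsilon=N^{\alpha-1}$ followed by the in-sample bound \eqref{eq:q2} with $\kappa=N^{\alpha-1}$. Since $(1+\epsilon)(1+\kappa)=(1+N^{\alpha-1})^2$, the contribution $(1+\kappa)E[(\xi^q_k)^2]$ cancels exactly against the term subtracted inside $c_k$, leaving the projection error $(1+N^{\alpha-1})\inf_{\psi\in\mathcal{K}_{q,k}}E[|\psi(X_{t_k})-\overline{q}^N_k(X_{t_k})|^2]$, the sample-variance contribution $(1+N^{\alpha-1})(1+N^{1-\alpha})C_{q,k}^2 K_{q,k}/M_k$, and the norm-change remainder $N^{1-\alpha}C_1 K_{q,k}\log(C_2 M_k)/M_k$; after absorbing the bounded factor $(1+N^{\alpha-1})\le 2$ into the generic constant, these are exactly the terms displayed inside $\max_{j\in\mathcal{I}}$ in the second inequality of the lemma.

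The main obstacle will be the simultaneous alignment of the three free constants $\Gamma$, $\kappa$, and $\epsilon$: the choice $(1+\kappa)(1+\epsilon)=(1+N^{\alpha-1})^2$ has to be compatible at once with the boundary recursion, with the definition of the telescoping multiplier $\lambda_i$, and with the subsequent application of \eqref{eq:q2} and Lemma \ref{lem:samplechange}, so that the positive part collapses exactly. Once this bookkeeping is fixed, the rest is a routine telescoping and counting argument.
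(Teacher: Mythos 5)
Your proposal is correct and follows essentially the same route as the paper: iterate the interior and boundary recursions with $\kappa=\epsilon=N^{\alpha-1}$, telescope using $E[(\xi^q_{N-1}(X_{t_{N-1}}))^2]=0$, bound the at-most-$\lceil N^{1-\alpha}\rceil$ boundary contributions by $\lceil N^{1-\alpha}\rceil\lambda_N$ times their supremum, and then collapse the positive part via Lemma \ref{lem:samplechange} and \eqref{eq:q2} with the matching choice $(1+\kappa)(1+\epsilon)=(1+N^{\alpha-1})^2$. The bookkeeping you flag as the main obstacle is handled exactly this way in the paper, so no further work is needed.
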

\begin{proof}
Iterating the previous calculations where we choose $\kappa=\epsilon=N^{\alpha-1}$ yields
\begin{align}
&E\left[\left(\xi_i^q(X_{t_i})\right)^2\right]\le \lambda_iE\left[\left(\xi_i^q(X_{t_i})\right)^2\right] \\
&\le \lambda_{i+1}E\left[\left(\xi^{q}_{i+1}(X_{t_{i+1}})\right)^2\right]\\
&\quad +\lambda_i(1+\frac{1}{\Delta\Gamma})E\left[E\left[\int_{t_{i+1}}^{t_{i+2}}f\left(s,X_s,Y_s,Z_s\right)-f\left(t_{i+1},X_{t_{i+1}},q_{i+1}^{N,M},z_{i+1}^{N,M}\right)ds\middle|\F_0^M,X_{t_i}\right]^2\right]\\
&\quad +\lambda_{i+1}\left(E\left[\big\|\overline{q}^N_{i+1}-q_{i+1}^{N,M}\big\|^2_{i+1,\infty}\right]-(1+N^{\alpha-1})^2E\left[\left(\xi^{q}_{i+1}(X_{t_{i+1}})\right)^2\right]\right)_+ \mathds{1}_{\mathcal{I}}(i+1)\\
&\le \lambda_{N-1} E\left[\left(\xi^{q}_{N-1}(X_{t_{N-1}})\right)^2\right]\\
&\quad +(1+\frac{1}{\Delta\Gamma})\sum_{j=i}^{N-2}\lambda_{j}E\left[E\left[\int_{t_{j+1}}^{t_{j+2}}f\left(s,X_s,Y_s,Z_s\right)-f\left(t_{j+1},X_{t_{j+1}},q_{j+1}^{N,M},z_{j+1}^{N,M}\right)ds\middle|\F_0^M,X_{t_j}\right]^2\right]\\
&\quad +\lceil N^{1-\alpha}\rceil\lambda_N\max_{j\in \mathcal{I}}\left(E\left[\big\|\overline{q}^N_{j}-q_{j}^{N,M}\big\|_{j,\infty}^2\right]-(1+N^{\alpha-1})^2E\left[\left(\xi^{q}_{j}(X_{t_{j}})\right)^2\right]\right)_+.
\end{align}
Then we have by definition $E[\xi^{q}_{N-1}(X_{t_{N-1}})^2]=0$ since $\Xi^{N,M}_{N-1}=\Xi^N_{N-1}$ and the recursion terminates. This already proves the first statement of Lemma \ref{lem:Q}. Additionally, using Lemma \ref{lem:samplechange} with $\epsilon=N^{\alpha-1}$ and  the inequality \eqref{eq:q2} for $\kappa=N^{\alpha-1}$ we have for any $j$:
\begin{eqnarray}\label{eq:q3}
\begin{split}
&\left(E\left[\big\|\overline{q}^N_{j}-q_j^{N,M}\big\|^2_{j,\infty}\right]-(1+N^{\alpha-1})^2E\left[\left(\xi^{q}_{j}(X_{t_j})\right)^2\right]\right)_+\\
&\le\Bigg((1+N^{\alpha-1})E\left[\big\|\overline{q}^N_{j}-q_j^{N,M}\big\|^2_{j,M}\right]+\frac{N^{1-\alpha}C_1K_{q,j}\log(C_2M_{j})}{ M_{j}}\\
&\quad -(1+N^{\alpha-1})^2E\left[\left(\xi^{q}_{j}(X_{t_j})\right)^2\right]\Bigg)_+\\
&\le\left((1+N^{\alpha-1})\left(\inf_{\psi\in \mathcal{K}_{q,j}} E\left[\left|\overline{q}^N_j(X_{t_j})-\psi(X_{t_j})\right|^2\right]+(1+N^{1-\alpha})\frac{C^2_{q,j}K_{q,j}}{M_j}\right.\right.\\
&\quad \left.\left.+(1+N^{\alpha-1})E\left[\left(\xi^{q}_{j}(X_{t_j})\right)^2\right]\right)+\frac{N^{1-\alpha}C_1K_{q,j}\log(C_2M_{j})}{\epsilon M_{j}}-(1+N^{\alpha-1})^2E\left[\left(\xi^{q}_{j}(X_{t_j})\right)^2\right]\right)_+\\
&\le(1+N^{\alpha-1})\left(\inf_{\psi\in \mathcal{K}_{q,j}} E\left[\left|\overline{q}^N_j(X_{t_j})-\psi(X_{t_j})\right|^2\right]+(1+N^{1-\alpha})\frac{C^2_{q,j}K_{q,j}}{M_j}\right)+\frac{N^{1-\alpha}C_1K_{q,j}\log(C_2M_{j})}{M_{j}}. 
\end{split}
\end{eqnarray}
The second statement of Lemma \ref{lem:Q} then follows by plugging in the estimate above in the first statement and taking the maximum over all time points.
\end{proof}
\subsection{Error of the Z approximation}
Analogously to the previous section, we now analyze the quadratic error of the approximation of $\overline{z}^N$ via the terms $E[\|\overline{z}^N_i-z_i^{N,M}\|^2_{i,M}]$. Again, we first establish a bound on the error propagation between the time steps before deriving global bounds.\\
\emph{Error propagation:} While the later steps require changes, we can get an analog of the inequality \eqref{eq:q2} by applying the same arguments as before. It holds that
\begin{align}
E_i^M\left[\frac{\Delta W^{[i,m]}_{i+1}}{\Delta} \Xi^N_i\left(X^{[i,m]}\right)\right]&=\overline{z}^N_i\left(X_{t_i}^{[i,m]}\right)
\end{align}
and therefore, we have by  by Lemma \ref{lem:projection} (iii)
\begin{eqnarray}\label{eq:z2}
E^M_i\left[\varphi^{\overline{z}^N}_i\right]=\arginf_{\psi\in \mathcal{K}_{z,i}}\left(\frac{1}{M}\sum_{i=1}^M\Big|\psi\left(X_{t_i}^{[i,m]}\right)-\frac{\Delta W^{[i,m]}_{i+1}}{\Delta}\overline{z}^N_i\left(X_{t_i}^{[i,m]}\right)\Big|^2\right).
\end{eqnarray}
We conclude that  $\overline{z}^N _i-E^M_i[\varphi^{\overline{z}^N}_{i}]$ is orthogonal on $E_i^M[\varphi^{\overline{z}^N}_{i}]-\varphi^{z^{N,M}}_{i}$ with respect to $\|.\|_{i,M}$. Additionally,  since $|\Xi^N_i|\le C_{q,i}$ due to assumptions $(A_\xi)$ and $(A_f)$, it holds 
\begin{align}
\big|\overline{z}^{N,(d)}_i(x)\big|\le C_{z,i}=\frac{C_{q,i}}{\Delta}
\end{align}
for each component $\overline{z}_i^{N,(d)}$, $d=1,\ldots, \mathcal{D}$ of $\overline{z}_i^N$, $x\in \mathbb{R}^D$ and  $i\in \{0,\ldots, N-1\}$. With that, we obtain for an arbitrary $\kappa>0$ that
\begin{align}
\Delta E\left[\big\|\overline{z}^N_i-z_i^{N,M}\big\|^2_{i,M}\right]&=\Delta E\left[\big\|\mathcal{T}_{C_{z,i}}\left(\overline{z}^N_i(.)\right)-\mathcal{T}_{C_{z,i}}\left(\varphi_i^{z^{N,M}}(.)\right)\big\|_{i,M}^2\right]\\
&\le \Delta  E\left[\big\|\overline{z}_i^{N}(.)-E_i^M\left[\varphi^{\overline{z}^N}_i(.)\right]+E_i^M\left[\varphi^{\overline{z}^N}_i(.)\right]-\varphi^{z^{N,M}}_i(.)\big\|^2_{i,M}\right]\\
&= \Delta \left(E\left[\big\|\overline{z}_i^N(.)-E_i^M\left[\varphi^{\overline{z}^N}_i(.)\right]\big\|^2_{i,M}\right]+E\left[\big\|E_i^M\left[\varphi^{\overline{z}^N}_i(.)\right]-\varphi^{z^{N,M}}_i(.)\big\|^2_{i,M}\right]\right)\\
&\le \Delta\left(E\left[\big\|\overline{z}^N_i(.)-E^M_i\left[\varphi^{\overline{z}^N}_{i}(.)\right]\big\|^2_{i,M}\right]\right.\\
&\quad +(1+\kappa^{-1})+E\left[\big\|E^M_i\left[\varphi^{z^{N,M}}_{i}(.)\right]-\varphi^{z^{N,M}}_{i}(.)\big\|_{i,M}^2\right]\\
&\quad \left.+(1+\kappa)E\left[\big\|E_i^M\left[\varphi^{z^{N,M}}_{i}(.)-\varphi^{\overline{z}^N}_{i}(.)\right]\big\|_{i,M}^2\right]\right).
\end{align}
Once more we handle the appearing terms separately:\\
First, analogously to the corresponding term in  the previous section, it follows due to equation \eqref{eq:z2} that  
\begin{align}
E[\|\overline{z}^N_i(.)-E^M_i[\varphi^{\overline{z}^N}_{i}(.)]\|^2_{i,M}]\le \inf_{\psi \in \mathcal{K}_{z,i}}E[|\psi(X_{t_i})-\overline{z}^N_i(X_{t_i})|^2],
\end{align}
which describes the best approximation error of $\overline{z}^N$ using the basis functions and is part of the final error representation.\\
For the next term, note again that  $\Xi^{N,M}_i$ is bounded by $C_{q,i}$ for all $i\in \{0,\ldots, N-1\}$. We conclude that
\begin{align}
E^M_i\left[\Bigg|\frac{\Delta W^{[i,m]}_{i+1}}{\Delta}\Xi^{N,M}_i(X^{[i,m]})-E^M_i\left[\frac{\Delta W^{[i,m]}_{i+1}}{\Delta}\Xi^{N,M}_i(X^{[i,m]})\right]\Bigg|^2\right]&\le  E^M_i\left[\Big|\frac{\Delta W^{[i,m]}_{i+1}}{\Delta}\Xi^{N,M}_i(X^{[i,m]})\Big|^2\right]\\
&\le \frac{\mathcal{D}C^2_{q,i}}{\Delta}.
\end{align}
Then, since $\Xi^{N,M}_i$ is built using only simulations of the clouds $\mathcal{S}_k$ for $k>i,$ it follows by Lemma \ref{lem:projection} (iv) that  $E[\|E_i^M[\varphi^{z^{N,M}}_{i}]-\varphi^{z^{N,M}}_{i}\|^2_{i,M}]$ is bounded by $C_{q,i}^2\frac{\mathcal{D}K_{z,i}}{\Delta M_i}$.\\
For the last term, we have by \eqref{eq:z1} that
\begin{align}
E_i^M\left[\frac{\Delta W^{[i,m]}_{i+1}}{\Delta}\left(\Xi_i^{N,M}(X^{[i,m]})-\Xi^{N}_i(X^{[i,m]})\right)\right]=\xi^{z}_i(X^{[i,m]}_{t_i})
\end{align}
with 
\begin{align}
\xi^{z}_i(x)=E\left[\frac{\Delta W_{i+1}}{\Delta}\left(\Xi^{N}_{i}(\underline{X}_{t_i})-\Xi^{N,M}_{i}(\underline{X}_{t_i})\right)\middle|X_{t_i}=x,\mathcal{F}_0^M\right].
\end{align}
Then, by Lemma \ref{lem:projection} (i) and (iii), it follows that 
\begin{align}
E_i^M\left[\varphi^{z^{N,M}}_{i}(.)-\varphi^{\overline{z}^N}_{i}(.)\right]=\arginf_{\psi\in \mathcal{K}_{z,i}}\left(\frac{1}{M}\sum_{i=1}^M\Big|\psi\left(X_{t_i}^{[i,m]}\right)-\xi_i^z\left(X_{t_i}^{[i,m]}\right)\Big|^2\right).
\end{align}
Hence, by Lemma \ref{lem:projection} (ii) we have
\begin{align}
E\left[\big\|E_i^M\left[\psi^{z^{N,M}}_{i}-\psi^{\overline{z}^N}_{i}\right]\big\|_{i,M}^2\right]\le E\left[\big\|\xi^{z}_i\big\|_{i,M}^2\right]=E\left[\left(\xi^{z}_i(X_{t_i})\right)^2\right].
\end{align}
Plugging in the estimates obtained so far we have
\begin{eqnarray}\label{eq:z3}
\begin{split}
\Delta E\left[\big\|\overline{z}^N_i-z_i^{N,M}\big\|^2_{i,M}\right]&\le \Delta \left(\inf_{\psi \in \mathcal{K}_{z,i}}E\left[\big|\psi\left(X_{t_i}\right)-\overline{z}^N_i\left(X_{t_i}\right)\big|^2\right]\right.\\
&\quad\left.+(1+\kappa^{-1})C_{q,i}^2\frac{\mathcal{D}K_{z,i}}{\Delta M_i}+(1+\kappa)E\left[\left(\xi^{z}_i(X_{t_i})\right)^2\right]\right).
\end{split}
\end{eqnarray}
Now using the tower property and a zero addition, we get
\begin{align}
\Delta E\left[\left(\xi^{z}_i(X_{t_i})\right)^2\right]&=\Delta E\left[E\left[\frac{\Delta W_{i+1}}{\Delta}\left(\Xi^{N,M}_{i}(\underline{X}_{t_i})-\Xi^N_{i}(\underline{X}_{t_i})\right)\middle|\mathcal{F}_0^{M}, X_{t_i}\right]^2\right]\\
&=\Delta E\left[E\left[\frac{\Delta W_{i+1}}{\Delta}E\left[\Xi^{N,M}_{i}(\underline{X}_{t_i})-\Xi^N_{i}(\underline{X}_{t_i})\middle|\mathcal{F}_0^M, \mathcal{F}_{t_{i+1}}\right]\middle|\mathcal{F}_0^M, X_{t_i}\right]^2\right]\\
&=\Delta E\left[E\left[\frac{\Delta W_{i+1}}{\Delta}\left(E\left[\Xi^{N,M}_{i}(\underline{X}_{t_i})-\Xi^N_{i}(\underline{X}_{t_i})\middle|\mathcal{F}_0^M, \mathcal{F}_{t_{i+1}}\right]\right.\right.\right.\\
&\quad \left.\left.\left.-E\left[\Xi^{N,M}_{i}(\underline{X}_{t_i})-\Xi^N_{i}(\underline{X}_{t_i})\middle|\mathcal{F}_0^M, X_{t_i}\right]\right)\middle|\mathcal{F}_0^M, X_{t_i}\right]^2\right]\\
&\le \mathcal{D} E\left[E\left[\left(E\left[\Xi^{N,M}_{i}(\underline{X}_{t_i})-\Xi^N_{i}(\underline{X}_{t_i})\middle|\mathcal{F}_0^M, \mathcal{F}_{t_{i+1}}\right]\right.\right.\right.\\
&\quad \left.\left.\left.-E\left[\Xi^{N,M}_{i}(\underline{X}_{t_i})-\Xi^N_{i}(\underline{X}_{t_i})\middle|\mathcal{F}_0^M, X_{t_i}\right]\right)^2\middle|\mathcal{F}_0^M, X_{t_i}\right]\right]\\
&\le  \mathcal{D} E\left[E\left[E\left[\Xi^{N,M}_{i}(\underline{X}_{t_i})-\Xi^N_{i}(\underline{X}_{t_i})\middle|\mathcal{F}_0^M, \mathcal{F}_{t_{i+1}}\right]^2\middle|\mathcal{F}_0^M,X_{t_i}\right]\right.\\
&\quad -\left.\mathcal{D} E\left[\Xi^{N,M}_{i}(\underline{X}_{t_i})-\Xi^N_{i}(\underline{X}_{t_i})\middle|\mathcal{F}_0^M, X_{t_i}\right]^2\right].
\end{align}
For the next step we have to distinguish between the time points again. If $t_i$ and $t_{i+1}$ are in the same segment, i.e., at all time points  $t_i$ such that $t_{i+1}\not\in \overline{\pi}_\alpha$, it holds $\tau_\alpha(i)=\tau_\alpha(i+1)$ and we get for a $\Gamma>0$ which will be specified later that
\begin{align}
&\Delta E\left[\left(\xi^{z}_i(X_{t_i})\right)^2\right]\\
&\le \mathcal{D}E\left[E\left[\overline{q}_{\tau_\alpha(i)}(X_{t_{\tau_\alpha(i)}})-q^{N,M}_{\tau_\alpha(i)}(X_{t_{\tau_\alpha(i)}})+\sum_{j=i+1}^{\tau_\alpha(i)}\int_{t_j}^{t_{j+1}}f(s,X_s,Y_s,Z_s)\right.\right.\\
&\quad \left.\left. -f\left(t_j,X_{t_j},q_j^{N,M},z_j^{N,M}\right)ds\middle|\mathcal{F}_0^M,\mathcal{F}_{t_{i+1}}\right]^2\right]-\mathcal{D} E\left[E\left[\Xi^{N,M}_{i}(\underline{X}_i)-\Xi^N_{i}(\underline{X}_i)\middle|\mathcal{F}_0^M, X_{t_i}\right]^2\right]\\
&\le (1+\Gamma\Delta)\mathcal{D}E\left[\left(\xi^{q}_{i+1}(X_{t_{i+1}})\right)^2\right]\\
&\quad +\mathcal{D}(1+\frac{1}{\Delta\Gamma})E\left[E\left[\int_{t_{i+1}}^{t_{i+2}}f(s,X_s,Y_s,Z_s)-f(t_{i+1},X_{t_{i+1}},q_{i+1}^{N,M},z_{i+1}^{N,M})ds\middle|\F_0^M,X_{t_i}\right]^2\right]\\
&\quad -\mathcal{D}E\left[\left(\xi^{q}_i(X_{t_i})\right)^2\right].
\end{align}
Like in the derivation of the error of the approximation of  $\overline{q}^N$, the last inequality in the calculations above does not hold true when considering $E[\xi^{z}_i(X_{t_i})^2]$ at time points $t_i$ at the end of a segment, i.e., $t_i$ such that $t_{i+1}\in \overline{\pi}_\alpha$. However, by adding and subtracting a multiple of $E[(\xi^{q}_{i+1}(X_{t_{i+1}}))^2]$, we again get the similar bound  
\begin{align}
&\Delta E\left[\left(\xi^{z}_i(X_{t_i})\right)^2\right]\\
&\le \mathcal{D}(1+\Gamma\Delta)(1+\kappa)(1+\epsilon)E\left[\left(\xi^{q}_{i+1}(X_{t_{i+1}})\right)^2\right]\\
&\quad +\mathcal{D}(1+\frac{1}{\Delta\Gamma})E\left[E\left[\int_{t_{i+1}}^{t_{i+2}}f(s,X_s,Y_s,Z_s)-f(t_{i+1},X_{t_{i+1}},q_{i+1}^{N,M},z_{i+1}^{N,M})ds\middle|\F_0^M,X_{t_i}\right]^2\right]\\
&\quad -\mathcal{D}E\left[\left(\xi^{q}_{i}(X_{t_i})\right)^2\right]\\
&\quad +\mathcal{D}(1+\Gamma\Delta)\left(E\left[\big\|\overline{q}^N_{i+1}-q_{i+1}^{N,M}\big\|^2_{i+1,\infty}\right]-(1+\kappa)(1+\epsilon)E\left[\left(\xi^{q}_{i+1}(X_{t_{i+1}})\right)^2\right]\right)_+
\end{align}
for these time points, with an additional error term that depends on the approximation of $\overline{q}^N$ at the next time point $t_{i+1}$. As argued in the analysis of the approximation of $\overline{q}^N$, this term results from the single use of an ODP step in the discretization scheme that is used to connect two time segments.\\
Next we want to derive a global error bound for the approximation of $\overline{z}^N$. Since $\overline{z}^N$ appears in the discretization scheme only as argument of the driver,  we state this term as an averaged sum over the time steps rather than the maximum. 
\begin{lem}\label{lem:Z}
Let $\Gamma$ be a positive constants and set $\lambda_i:=(1+\Gamma\Delta)^i((1+N^{\alpha-1})^{2|\{j\le i: t_j\in \overline{\pi}_\alpha\}|}$ for $i\in  \{0,\ldots,N-1\}$. Then
\begin{align}
&\sum_{i=0}^{N-1}\Delta \lambda_iE\left[\left(\xi^{z}_i(X_{t_i})\right)^2\right]\\
&\le \mathcal{D}\sum_{i=0}^{N-2}\lambda_{i}(1+\frac{1}{\Delta\Gamma})E\left[E\left[\int_{t_{i+1}}^{t_{i+2}}f(s,X_s,Y_s,Z_s)-f(t_{i+1},X_{t_{i+1}},q_{i+1}^{N,M},z_{i+1}^{N,M})ds\middle|\F_0^M,X_{t_i}\right]^2\right]\\
&\quad +\lambda_N\mathcal{D}\lceil N^{1-\alpha}\rceil\max_{j\in \mathcal{I}}\left((1+N^{\alpha-1})\left(\inf_{\psi \in \mathcal{K}_{q,j}}E\left[\big|\psi(X_{t_j}-\overline{q}^N_i(X_{t_j})\big|^2\right]+(1+N^{1-\alpha})\frac{C^2_{q,j}K_{q,j}}{M_j}\right)\right.\\
&\quad \left.+\frac{N^{1-\alpha}C_1K_{q,j}\log(C_2M_{j})}{M_{j}}\right).
\end{align}
\end{lem}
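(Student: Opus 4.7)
The plan is to multiply the two recursion inequalities derived immediately before the lemma by $\lambda_i$ and sum over $i$. Specifically, for indices with $t_{i+1}\notin\overline{\pi}_\alpha$ we have
\begin{align}
\Delta E[(\xi^z_i(X_{t_i}))^2] &\le \mathcal{D}(1+\Gamma\Delta) E[(\xi^q_{i+1}(X_{t_{i+1}}))^2] - \mathcal{D} E[(\xi^q_i(X_{t_i}))^2] \\
&\quad + \mathcal{D}(1+\tfrac{1}{\Delta\Gamma}) D_i,
\end{align}
while for indices with $t_{i+1}\in\overline{\pi}_\alpha$ the same bound holds with an extra factor $(1+\kappa)(1+\epsilon)$ in front of $E[(\xi^q_{i+1})^2]$ and an additional term $\mathcal{D}(1+\Gamma\Delta)(E[\|\overline{q}^N_{i+1}-q^{N,M}_{i+1}\|^2_{i+1,\infty}]-(1+\kappa)(1+\epsilon)E[(\xi^q_{i+1})^2])_+$. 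Here $D_i$ denotes the inner conditional driver term appearing in the statement.

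The central trick is to choose $\kappa=\epsilon=N^{\alpha-1}$, so that the multiplicative factor in front of $E[(\xi^q_{i+1})^2]$ in the weighted inequality is exactly $\lambda_{i+1}/\lambda_i$ in both cases, i.e., $\lambda_i(1+\Gamma\Delta)=\lambda_{i+1}$ when $t_{i+1}\notin\overline{\pi}_\alpha$ and $\lambda_i(1+\Gamma\Delta)(1+N^{\alpha-1})^2=\lambda_{i+1}$ when $t_{i+1}\in\overline{\pi}_\alpha$. After multiplying by $\lambda_i$ and summing over $i=0,\dots,N-2$, the contributions involving $E[(\xi^q_\cdot)^2]$ collapse to a telescoping sum $\mathcal{D}\sum_{i=0}^{N-2}\bigl(\lambda_{i+1}E[(\xi^q_{i+1})^2]-\lambda_i E[(\xi^q_i)^2]\bigr)=\mathcal{D}\lambda_{N-1}E[(\xi^q_{N-1})^2]-\mathcal{D}\lambda_0 E[(\xi^q_0)^2]$, which is non-positive since $\xi^q_{N-1}\equiv 0$ (as $\Xi^N_{N-1}=\xi=\Xi^{N,M}_{N-1}$) and hence can be dropped when proving an upper bound. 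The term at $i=N-1$ in the sum on the left-hand side vanishes for the same reason, so restricting the sum to $i\le N-2$ is harmless.

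For the Q-approximation excess terms contributed by the indices with $t_{i+1}\in\overline{\pi}_\alpha$, there are at most $\lceil N^{1-\alpha}\rceil$ such $i$. I apply the bound obtained in estimate \eqref{eq:q3} in the proof of Lemma~\ref{lem:Q} (with $\kappa=\epsilon=N^{\alpha-1}$ and the index shifted by one), which controls each excess $(E[\|\overline{q}^N_{j}-q_j^{N,M}\|^2_{j,\infty}]-(1+N^{\alpha-1})^2E[(\xi^q_j)^2])_+$ by exactly the bracketed expression that appears inside the $\max_{j\in\mathcal{I}}$ in the claim. Using the crude bound $\lambda_i(1+\Gamma\Delta)\le\lambda_N$ for these finitely many indices and replacing each individual bound by the maximum over $\mathcal{I}$ yields the prefactor $\mathcal{D}\lambda_N\lceil N^{1-\alpha}\rceil$ as stated.

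The main obstacle is purely bookkeeping: matching the multiplicative inflation of $\lambda_i$ at segment boundaries to the parameters $\kappa,\epsilon$ in Young's inequality so that the telescoping is exact, and exploiting that the leftover boundary term $-\mathcal{D}\lambda_0 E[(\xi^q_0)^2]$ has the right sign to be discarded. Once this alignment is set up correctly, the two-case recursion, the excess bound from Lemma~\ref{lem:Q}, and the counting argument for segment ends combine directly into the claimed estimate.
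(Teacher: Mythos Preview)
Your proposal is correct and follows essentially the same approach as the paper's proof: choose $\kappa=\epsilon=N^{\alpha-1}$ so that $\lambda_i(1+\Gamma\Delta)(1+N^{\alpha-1})^{2\mathds{1}_{\mathcal{I}}(i+1)}=\lambda_{i+1}$, multiply the pointwise recursion by $\lambda_i$, sum over $i$, telescope the $\xi^q$-terms (using $\xi^q_{N-1}=\xi^z_{N-1}=0$), bound the at most $\lceil N^{1-\alpha}\rceil$ excess terms at segment boundaries by $\lambda_N$ times the maximum over $\mathcal{I}$, and invoke the estimate \eqref{eq:q3} from the proof of Lemma~\ref{lem:Q}.
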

\begin{proof}
First, note that $E[(\xi^{z}_{N-1}(X_{t_{N-1}}))^2]=0$ by definition since $\Xi^{{N,M}}_{N-1}=\Xi^N_{N-1}$. Then, by plugging in the estimate for $\Delta E[(\xi^{z}_i(X_{t_i}))^2]$ from the analysis of the error propagation where we choose $\epsilon=\kappa=N^{\alpha-1}$ for all $i$ and summing up we get
\begin{align}
&\sum_{i=0}^{N-2}\Delta \lambda_{i}E\left[\left(\xi^{z}_i(X_{t_i})\right)^2\right]\\
&\le \mathcal{D}\sum_{i=0}^{N-2} \Bigg(\lambda_{i+1}E\left[\left(\xi^{q}_{i+1}(X_{t_{i+1}})\right)^2\right]\\
&\quad +\lambda_i(1+\frac{1}{\Delta\Gamma}) E\left[E\left[\int_{t_{i+1}}^{t_{i+2}}f(s,X_s,Y_s,Z_s)-f(t_{i+1},X_{t_{i+1}},q_{i+1}^{N,M},z_{i+1}^{N,M})ds\middle|\F_0^M,X_{t_i}\right]^2\right]\\
&\quad -\lambda_iE\left[\left(\xi^{q}_{i}(X_{t_i})\right)^2\right]\\\
&\quad +\lambda_i(1+\Gamma\Delta)\left(E\left[\big\|\overline{q}^N_{i+1}-q^{N,M}_{i+1}\big\|^2_{i+1,\infty}\right]-(1+N^{\alpha-1})^2E\left[\left(\xi^{q}_{i+1}(X_{t_{i+1}})\right)^2\right]\right)_+\mathds{1}_{\mathcal{I}}(i+1)\Bigg)\\
&\le \mathcal{D}\sum_{i=0}^{N-2}\lambda_{i}(1+\frac{1}{\Delta\Gamma}) E\left[E\left[\int_{t_{i+1}}^{t_{i+2}}f(s,X_s,Y_s,Z_s)-f(t_{i+1},X_{t_{i+1}},q_{i+1}^{N,M},z_{i+1}^{N,M})ds\middle|\F_0^M,X_{t_i}\right]^2\right]\\
&\quad +\mathcal{D}\lambda_N\lceil N^{1-\alpha}\rceil\sup_{j \in \mathcal{I}}\left(\inf_{\psi \in \mathcal{K}_{q,j}}E\left[\big|\psi(X_{t_j}-\overline{q}^N_j(X_{t_j})\big|^2\right] -(1+N^{\alpha-1})^2E\left[\left(\xi^{q}_{j}(X_{t_{j}})\right)^2\right]\right)_+.
\end{align}
Plugging in the estimate for  $E[\|\overline{q}^N_{i}-q_{i}^{N,M}\|^2_{i,\infty}]$ from \eqref{eq:q3} derived in the proof of Lemma \ref{lem:Q} finishes the proof.
\end{proof}
\subsection{Bounds for the driver-dependent terms}
In this section, we derive a bound for the sum of the terms 
\begin{align}
E\left[E\left[\int_{t_{i+1}}^{t_{i+2}}f\left(s,X_s,Y_s,Z_s\right)-f\left(t_{i+1},X_{t_{i+1}},q_{i+1}^{N,M},z_{i+1}^{N,M}\right)ds\middle|\F^M,X_{t_i}\right]^2\right]
\end{align}
over the time steps that appears in the bounds of Lemma \ref{lem:Q} and \ref{lem:Z}. For this purpose, note that for any $i\in \{0,\ldots, N-1\}$, it holds by Fubini's theorem 
\begin{align}
&E\left[E\left[\int_{t_{i+1}}^{t_{i+2}}f\left(s,X_s,Y_s,Z_s\right)-f\left(t_{i+1},X_{t_{i+1}},q_{i+1}^{N,M},z_{i+1}^{N,M}\right)ds\middle|\mathcal{F}_0^M, X_{t_{i}}\right]^2\right]\\
&\le E\Bigg[\bigg(\int_{t_{i+1}}^{t_{i+2}}E_i\left[f\left(s,X_s,Y_s,Z_s\right)-f\left(t_{i+1},X_{t_{i+1}},\overline{q}^N_{i+1},\overline{z}^N_{i+1}\right)\right]\\
&\quad +E\left[f\left(t_{i+1},X_{t_{i+1}},\overline{q}^N_{i+1},\overline{z}^N_{i+1}\right)-f\left(t_{i+1},X_{t_{i+1}},q^{N,M}_{i+1},z^{N,M}_{i+1}\right)\middle|\mathcal{F}_0^M,X_{t_i}\right]ds\bigg)^2\Bigg]\\
&\le 2E\left[\left(\int_{t_{i+1}}^{t_{i+2}}E_i\left[f\left(s,X_s,Y_s,Z_s\right)-f\left(t_{i+1},X_{t_{i+1}},\overline{q}^N_{i+1},\overline{z}^N_{i+1}\right)\right]ds\right)^2\right]\\
&\quad +2\Delta^2E\left[E\left[f\left(t_{i+1},X_{t_{i+1}},\overline{q}^N_{i+1},\overline{z}^N_{i+1}\right)-f\left(t_{i+1},X_{t_{i+1}},q^{N,M}_{i+1},z^{N,M}_{i+1}\right)\middle|\mathcal{F}_0^M,X_{t_i}\right]^2\right].
\end{align}
Then, by the Lipschitz assumption on $f$ we get
\begin{align}
&E\left[E\left[f\left(t_{i+1},X_{t_{i+1}},\overline{q}^N_{i+1},\overline{z}^N_{i+1}\right)-f\left(t_{i+1},X_{t_{i+1}},q^{N,M}_{i+1},z^{N,M}_{i+1}\right)\middle|\mathcal{F}_0^M,X_{t_i}\right]^2\right]\\
&\le 2L_f^2E\left[E\left[\big|\overline{q}^N_{i+1}(X_{t_{i+1}})-q_{i+1}^{N,M}(X_{t_{i+1}})\big|\middle|\mathcal{F}^M_0,X_{t_i}\right]^2\right.\\
&\quad \left.+E\left[\big|\overline{z}^N_{i+1}(X_{t_{i+1}})-z_{i+1}^{N,M}(X_{t_{i+1}})\big|\middle|\mathcal{F}_0^M,X_{t_i}\right]^2\right]\\
&\leq 2L_f^2\left(E\left[\big\|\overline{q}^N_{i+1}-q_{i+1}^{N,M}\big\|_{i+1,\infty}^2\right]+E\left[\big\|\overline{z}^N_{i+1}-z_{i+1}^{N,M}\big\|_{i+1,\infty}^2\right]\right).\\
\end{align}
Hence it holds
\begin{align}
&\sum_{i=0}^{N-2}\lambda_{i}E\left[E\left[\int_{t_{i+1}}^{t_{i+2}}f\left(s,X_s,Y_s,Z_s\right)-f\left(t_{i+1},X_{t_{i+1}},q_{i+1}^{N,M},z_{i+1}^{N,M}\right)ds\middle|\F_0^M,X_{t_i}\right]^2\right]\\
&\le \sum_{i=0}^{N-2}\lambda_{i}2E\left[\left(\int_{t_{i+1}}^{t_{i+2}}E_i\left[f\left(s,X_s,Y_s,Z_s\right)-f\left(t_{i+1},X_{t_{i+1}},\overline{q}^N_{i+1},\overline{z}^N_{i+1}\right)\right]ds\right)^2\right]\\
&\quad +\sum_{i=0}^{N-2}\lambda_{i}4\Delta^2L_f^2\left(E\left[\big\|\overline{q}^N_{i+1}-q_{i+1}^{N,M}\big\|_{i+1,\infty}^2\right]+E\left[\big\|\overline{z}^N_{i+1}-z_{i+1}^{N,M}\big\|_{i+1,\infty}^2\right]\right)\\
&\le \sum_{i=0}^{N-2}\lambda_{i}2E\left[\left(\int_{t_{i+1}}^{t_{i+2}}E_i\left[f\left(s,X_s,Y_s,Z_s\right)-f\left(t_{i+1},X_{t_{i+1}},\overline{q}^N_{i+1},\overline{z}^N_{i+1}\right)\right]ds\right)^2\right]\\
&\quad +4\Delta L_f^2T\max_{0\le i\le N-1}\lambda_iE\left[\big\|\overline{q}^N_{i+1}-q_{i+1}^{N,M}\big\|_{i+1,\infty}^2\right]+4\Delta L_f^2\sum_{i=1}^{N-1}\lambda_i\Delta E\left[\big\|\overline{z}^N_{i+1}-z_{i+1}^{N,M}\big\|_{i+1,\infty}^2\right]\\
&\le2\lambda_N\mathcal{R}^N+4\Delta L_f^2T\max_{0\le i\le N-1}\lambda_iE\left[\big\|\overline{q}^N_{i+1}-q_{i+1}^{N,M}\big\|_{i+1,\infty}^2\right]+4\Delta L_f^2\sum_{i=1}^{N-1}\lambda_i\Delta E\left[\big\|\overline{z}^N_{i+1}-z_{i+1}^{N,M}\big\|_{i+1,\infty}^2\right]
\end{align}
with 
\begin{align}
\mathcal{R}^N:=\sum_{i=0}^{N-2}E\left[\left(\int_{t_{i+1}}^{t_{i+2}}E_i\left[f\left(s,X_s,Y_s,Z_s\right)-f\left(t_{i+1},X_{t_{i+1}},\overline{q}^N_{i+1},\overline{z}^N_{i+1}\right)\right]ds\right)^2\right]
\end{align}
as defined in Theorem \ref{thm:error1}. The term $\mathcal{R}^N$ does not depend on our approximation of the BSDE but only on the real solution $Y,Z$, the semi-continuous versions $\overline{q}^N,\overline{z}^N$ and the solution of the forward SDE $X$. It can be bounded in different ways depending on the regularity of these functions, which leads to the different  bounds of the total quadratic error in Theorem \ref{thm:error2} and \ref{thm:error3}.
\subsection{Final error bounds}
Using the bounds derived throughout this section, we are now ready to proof Theorem \ref{thm:error1}.
\begin{proof}
Proof of Theorem \ref{thm:error1}:\\
In the following calculations, $c$ denotes a positive constant that does not depend on $N$ and may change from line to line. First, we can write the quadratic error as
\begin{align}
&\max_{0\le i \le N-1}E\left[\big|\overline{q}^N_i(X_{t_i})-q_i^{N,M}(X_{t_i})\big|^2\right]+\sum_{i=0}^{N-1}\Delta E\left[\big|\overline{z}^N(X_{t_i})-z_i^{N,M}(X_{t_i})\big|^2\right]\\
&\le \max_{0\le i\le N-1}\lambda_iE\left[\big|\overline{q}^N_i(X_{t_i})-q_i^{N,M}(X_{t_i})\big|^2\right]+\sum_{i=0}^{N-2}\Delta\lambda_iE\left[\big|\overline{z}^N(X_{t_i})-z_i^{N,M}(X_{t_i})\big|^2\right]\\
&=\max_{0\le i \le N-1}\lambda_iE\left[\big\|\overline{q}^N_i-q_i^{N,M}\big\|^2_{i,\infty}\right]+\sum_{i=0}^{N-1}\Delta\lambda_iE\left[\big\|\overline{z}^N_i-z_i^{N,M}\big\|^2_{i,\infty}\right].
\label{eq:finalerror}
\end{align}
We can now estimate this term by Lemma \ref{lem:samplechange} with $\epsilon=1$ as
\begin{align}
&\max_{0\le i\le N-1}\lambda_iE\left[\big\|\overline{q}^N_i-q_i^{N,M}\big\|^2_{i,\infty}\right]+\sum_{i=0}^{N-1}\Delta\lambda_iE\left[\big\|\overline{z}^N_i-z_i^{N,M}\big\|^2_{i,\infty}\right]\\
&\le\max_{0\le i\le N-1}\left(2\lambda_iE\left[\big\|\overline{q}^N_i-q_i^{N,M}\big\|^2_{i,M}\right]+\lambda_i \frac{C_1K_{q,i}\log(C_2M_i)}{M_i}\right)\\
&\quad +\sum_{i=0}^{N-2}2\Delta \lambda_iE\left[\big\|\overline{z}^N_i-z_i^{N,M}\big\|^2_{i,M}\right]+\Delta\lambda_i\frac{C_1K_{z,i}\log(C_2M_i)}{\Delta M_i}.
\end{align}
By the inequalities \eqref{eq:q2} and \eqref{eq:z3} we then get with the choice $\kappa=1$
\begin{align}
&\max_{0\le i\le N-1}\lambda_iE\left[\big\|\overline{q}^N_i-q_i^{N,M}\big\|^2_{i,\infty}\right]+\sum_{i=0}^{N-1}\Delta\lambda_iE\left[\big\|\overline{z}^N_i-z_i^{N,M}\big\|^2_{i,\infty}\right]\\
&\le\lambda_N\max_{0\le i\le N-1}\left(4\frac{C_{q,i}K_{q,i}}{M_i}+2\underset{\psi\in \mathcal{K}_{q,i}}{\inf}E\left[\left|\overline{q}^N_i(X_{t_i})-\psi(X_{t_i})\right|^2\right]+\frac{C_1K_{q,i}\log(C_2M_i)}{M_i}\right)\\
&\quad +\sum_{i=0}^{N-1}\Delta\lambda_i  \left(4\frac{C_{q,i}^2K_{z,i}}{\Delta M_i}+2\underset{\psi\in \mathcal{K}_{z,i}}{\inf}E\left[\left|\overline{z}^N_i(X_{t_i})-\psi(X_{t_i})\right|^2\right]+\frac{C_1K_{z,i}\log(C_2M_i)}{\Delta M_i}\right)\\
&\quad +4\left(\max_{0\le i\le N-1}\lambda_iE\left[\left(\xi^q_i(X_{t_i})\right)^2\right]+\sum_{i=0}^{N-1}\Delta\lambda_iE\left[\left(\xi^{z}_i(X_{t_i})\right)^2\right]\right).\\
\end{align}
Now the bounds in Lemma \ref{lem:Q} and Lemma \ref{lem:Z} yield 
\begin{align}
&\max_{0\le i \le N-1}\lambda_iE\left[\left(\xi^q_i(X_{t_i})\right)^2\right]+\sum_{i=0}^{N-1}\Delta\lambda_iE\left[\left(\xi^{z}_i(X_{t_i})\right)^2\right]\\
&\le \sum_{i=0}^{N-2}(1+\frac{1}{\Delta\Gamma})\lambda_iE\left[E\left[\int_{t_{i+1}}^{t_{i+2}}f\left(s,X_s,Y_s,Z_s\right)-f\left(t_{i+1},X_{t_{i+1}},q^{N,M}_{i+1},z^{N,M}_{i+1}\right)ds\middle|\F_0^M,X_{t_i}\right]^2\right]\\
&\quad + \lceil N^{1-\alpha}\rceil\lambda_N\max_{j\in \mathcal{I}}\Bigg((1+N^{\alpha-1})\left(\underset{\psi\in \mathcal{K}_{q,i}}{\inf}E\left[\left|\overline{q}^N_i(X_{t_i})-\psi(X_{t_i})\right|^2\right]+(1+N^{1-\alpha})\frac{C_{q,i}^2K_{q,j}}{M_j}\right)\\
&\quad +\frac{N^{1-\alpha}C_1K_{q,j}\log(C_2M_{j})}{M_{j}}\Bigg)\\
&\quad +\mathcal{D}\sum_{i=0}^{N-2}\lambda_{i}(1+\frac{1}{\Delta\Gamma}) E\left[E\left[\int_{t_{i+1}}^{t_{i+2}}f\left(s,X_s,Y_s,Z_s\right)-f\left(t_{i+1},X_{t_{i+1}},q^{N,M}_{i+1},z^{N,M}_{i+1}\right)ds\middle|\F_0^M,X_{t_i}\right]^2\right]\\
&\quad +\lambda_N\mathcal{D}\lceil N^{1-\alpha}\rceil\max_{j\in \mathcal{I}}\Bigg((1+N^{\alpha-1})\left(\underset{\psi\in \mathcal{K}_{q,i}}{\inf}E\left[\left|\overline{q}^N_i(X_{t_i})-\psi(X_{t_i})\right|^2\right]+(1+N^{1-\alpha})\frac{C_{q,i}^2K_{q,j}}{M_j}\right)\\
&\quad +\frac{N^{1-\alpha}C_1K_{q,j}\log(C_2M_{j})}{M_{j}}\Bigg)\\
&= \sum_{i=0}^{N-2}(1+\mathcal{D})(1+\frac{1}{\Delta\Gamma})\lambda_iE\left[E\left[\int_{t_{i+1}}^{t_{i+2}}f\left(s,X_s,Y_s,Z_s\right)-f\left(t_{i+1},X_{t_{i+1}},q^{N,M}_{i+1},z^{N,M}_{i+1}\right)ds\middle|\F_0^M,X_{t_i}\right]^2\right]\\
&\quad +(1+\mathcal{D})\lceil N^{1-\alpha}\rceil \lambda_N\max_{j\in \mathcal{I}}\Bigg((1+N^{\alpha-1})\left(\underset{\psi\in \mathcal{K}_{q,i}}{\inf}E\left[\left|\overline{q}^N_i(X_{t_i})-\psi(X_{t_i})\right|^2\right]+(1+N^{1-\alpha})\frac{C_{q,i}^2K_{q,j}}{M_j}\right)\\
&\quad +\frac{N^{1-\alpha}C_1K_{q,j}\log(C_2M_{j})}{M_{j}}\Bigg).\\
\end{align}
By plugging in the bounds derived in Section 6.4, we can estimate this term further as
\begin{align}
&\max_{0\le i \le N-1}\lambda_iE\left[\left(\xi^q_i(X_{t_i})\right)^2\right]+\sum_{i=0}^{N-1}\Delta\lambda_iE\left[\left(\xi^{z}_i(X_{t_i})\right)^2\right]\\
&\le \sum_{i=0}^{N-2}(1+\mathcal{D})(1+\frac{1}{\Delta\Gamma})\lambda_iE\left[E\left[\int_{t_{i+1}}^{t_{i+2}}f\left(s,X_s,Y_s,Z_s\right)-f\left(t_{i+1},X_{t_{i+1}},q^{N,M}_{i+1},z^{N,M}_{i+1}\right)ds\middle|\F_0^M,X_{t_i}\right]^2\right]\\
&\quad +(1+\mathcal{D})\lceil N^{1-\alpha}\rceil \lambda_N\max_{j\in \mathcal{I}}\Bigg((1+N^{\alpha-1})\left(\underset{\psi\in \mathcal{K}_{q,i}}{\inf}E\left[\left|\overline{q}^N_i(X_{t_i})-\psi(X_{t_i})\right|^2\right]+(1+N^{1-\alpha})\frac{C_{q,i}^2K_{q,j}}{M_j}\right)\\
&\quad +\frac{N^{1-\alpha}C_1K_{q,j}\log(C_2M_{j})}{M_{j}}\Bigg)\\
&\le \left[(\Delta+\frac{1}{\Gamma})(1+\mathcal{D})\right] 4(T\vee 1)L_f^2\left(\max_{0\le i\le N-1}\lambda_iE\left[\big\|\overline{q}^N_{i}-q^{N,M}_i\big\|^2_{i,\infty}\right]+\sum_{i=0}^{N-2}\Delta \lambda_i E\left[\big\|\overline{z}^N_i-z^{N,M}_{i}\big\|^2_{i,\infty}\right]\right)\\
&\quad +  \left[(\Delta+\frac{1}{\Gamma})(1+\mathcal{D})\right]\Delta^{-1}\lambda_N\mathcal{R}^N\\
&\quad +\lceil N^{1-\alpha}\rceil \lambda_N(1+\mathcal{D})\max_{j\in \mathcal{I}}\Bigg((1+N^{\alpha-1})\left(\underset{\psi\in \mathcal{K}_{q,i}}{\inf}E\left[\left|\overline{q}^N_i(X_{t_i})-\psi(X_{t_i})\right|^2\right]+(1+N^{1-\alpha})\frac{C_{q,j}^2K_{q,j}}{M_j}\right)\\
& \quad+\frac{N^{1-\alpha}C_1K_{q,j}\log(C_2M_{j})}{M_{j}}\Bigg).\\
\end{align}
Now, assuming that  $N$ and $\Gamma$ are sufficiently large such that $ [(\Delta+\frac{1}{\Gamma})(1+\mathcal{D})]16L^2(T\vee 1)\le \frac{1}{2}$, we have
\begin{align}
&\max_{0\le i\le N-1}E\left[\big\|\overline{q}^N_i-q_i^{N,M}\big\|^2_{i,\infty}\right]\lambda_i+\sum_{i=0}^{N-1}\lambda_iE\left[\big\|\overline{z}^N_i-z_i^{N,M}\big\|^2_{i,\infty}\right]\Delta\\
&\le \frac{1}{2}\left(\max_{0\le i\le N-1}E\left[\big\|\overline{q}^N_i-q_i^{N,M}\big\|^2_{i,\infty}\right]\lambda_i+\sum_{i=0}^{N-1}\lambda_iE\left[\big\|\overline{z}^N_i-z_i^{N,M}\big\|^2_{i,\infty}\right]\Delta\right)\\
&+c\lambda_N\lceil N^{1-\alpha}\rceil \max_{j\in \mathcal{I}}\Bigg((1+N^{\alpha-1})\left(\underset{\psi\in \mathcal{K}_{q,i}}{\inf}E\left[\left|\overline{q}^N_i(X_{t_i})-\psi(X_{t_i})\right|^2\right]+(1+N^{1-\alpha})\frac{C_{q,j}^2K_{q,j}}{M_j}\right)\\
& \quad+\frac{N^{1-\alpha}C_1K_{q,j}\log(C_2M_{j})}{M_{j}}\Bigg)\\
&\quad +c\lambda_N \max_{0\le i\le N-1}\Bigg(\underset{\psi\in \mathcal{K}_{q,i}}{\inf}E\left[\left|\overline{q}^N_i(X_{t_i})-\psi(X_{t_i})\right|^2\right]+\frac{K_{q,i}}{M_i}+\frac{K_{q,i}\log(M_i)}{M_i}\\
&\quad +\underset{\psi\in \mathcal{K}_{z,i}}{\inf}E\left[\left|\overline{z}^N_i(X_{t_i})-\psi(X_{t_i})\right|^2\right]+\frac{K_{z,i}}{\Delta M_i}+\frac{K_{z,i}\log(M_i)}{\Delta M_i}\Bigg)\\
&\quad +c\Delta^{-1}\lambda_N\mathcal{R}^N.
\end{align}
Considering that $\lambda_N$ is bounded by a constant independent of $N$, since
\begin{align}
\lambda_N&=\left(1+\frac{T\Gamma}{N}\right)^N\left(1+N^{\alpha-1}\right)^{2\lceil N^{1-\alpha}\rceil}\\
&\le e^{T\Gamma N^{-1}N}e^{2\lceil N^{1-\alpha}\rceil N^{\alpha-1}}=e^{T\Gamma+4},
\end{align}
this implies
\begin{align}
&\max_{0\le i\le N-1}E\left[\big|\overline{q}^N_i(X_{t_i})-q_i^{N,M}(X_{t_i})\big|^2\right]+\sum_{i=0}^{N-2}\Delta E\left[\big|\overline{z}_i^N(X_{t_i})-z_i^{N,M}(X_{t_i})\big|^2\right]\\
&\le c\max_{i\in \mathcal{I}}\left(N^{1-\alpha} \inf_{\psi \in \mathcal{K}_{q,i}} E\left[\big|\psi(X_{t_i})-\overline{q}^N_i(X_{t_i})\big|^2\right] + N^{2-2\alpha}\frac{K_{q,i}}{M_i}+ N^{2-2\alpha}\frac{K_{q,i}\log(C_2M_i)}{M_i}\right)\\
&\quad +c\max_{0\le i\le N-1}\bigg(\inf_{\psi \in \mathcal{K}_{q,i}} E\left[\big|\psi(X_{t_i})-\overline{q}^N_i(X_{t_i})\big|^2\right] + \inf_{\psi \in \mathcal{K}_{z,i}} E\left[\big|\psi(X_{t_i})-\overline{z}^N_i(X_{t_i})\big|^2\right]\\
&\quad+\frac{K_{q,i}}{M_i}+N\frac{K_{z,i}}{M_i}+\frac{K_{q,i}\log(C_2M_i)}{M_i}+ N \frac{K_{z,i}\log(C_2M_i)}{M_i}\bigg)\\
&  \quad+cN\mathcal{R}^N
\end{align}
and finishes the proof.
\end{proof}
\noindent

\end{document}